\renewcommand\section{\@startsection {section}{1}{\z@}
{-30pt \@plus -1ex \@minus -.2ex}
{2.3ex \@plus.2ex}
{\normalfont\normalsize\bfseries\boldmath}}
\renewcommand\subsection{\@startsection{subsection}{2}{\z@}
{-3.25ex\@plus -1ex \@minus -.2ex}
{1.5ex \@plus .2ex}
{\normalfont\normalsize\bfseries\boldmath}}
\renewcommand{\@seccntformat}[1]{\csname the#1\endcsname. }
\theoremstyle{theorem}
\newtheorem{theorem}{Theorem}
\newtheorem{lemma}[theorem]{Lemma}
\newtheorem{proposition}[theorem]{Proposition}
\newtheorem{corollary}[theorem]{Corollary}
\theoremstyle{definition}
\newtheorem{definition}{Definition}
\begin{document}

\title{The Integer Sequence Transform $a \mapsto b$ where $b_n$ is the Number of Real Roots of the Polynomial  $a_0 + a_1x + a_2x^2 + \cdots + a_nx^n$}

\author{W.~Edwin Clark\\Department of Mathematics\\University of South Florida \\ 33620 Tampa, FL, USA\\wclark@mail.usf.edu\\
~\\Mark Shattuck\\ Department of Mathematics \\University of Tennessee\\
37996 Knoxville, TN, USA\\shattuck@math.utk.edu}

\date{\today}

\maketitle

\begin{abstract}
\noindent We discuss the integer sequence transform $a \mapsto b$ where $b_n$ is the number of real roots of  the polynomial $a_0 + a_1x + a_2x^2 + \cdots + a_nx^n$.  It is shown that several sequences $a$ give the trivial sequence $b = (0,1,0,1, 0,1,\ldots)$, i.e., ${b_n = n \bmod 2}$, among them the Catalan numbers, central binomial coefficients, $n!$ and $\binom{n+k}{n}$ for a fixed $k$.  We also look at some sequences $a$ for which $b$ is more interesting such as $a_n = (n+1)^k$ for $k \geq 3$.  Further, general procedures are given for constructing real sequences $a_n$ for which $b_n$ is either always maximal or minimal.
\end{abstract}

\small{
\noindent\emph{Keywords:} zeros of polynomials, integer sequence transform, ordinary generating function, completely real polynomials

\noindent\emph{2010 MSC Classifications:} 12D10 (primary); 11C08, 05A15 (secondary)} \normalsize

\section{Introduction} In this paper $a = (a_0,a_1,\ldots,a_n,\ldots)$ will, unless otherwise indicated, denote an integer sequence  with $a_0 \neq 0$.
Given such a sequence $a$ of integers, define the sequence $RR(a) = b$ where $b_n$ is the number of real roots  counting multiplicities of  the polynomial $\sum_{i=0}^{n} a_ix^i$.
Some obvious properties of $b$ are
\begin{description}
    \item [(1)\label{itm:(1)}] $b_0 = 0.$
    \item [(2)\label{itm:(2)}] $b_1 = 1$ if $a_1 \neq 0$.
    \item [(3)\label{itm:(3)}] $0 \leq b_n \leq n$ if $n$ is even.
    \item [(4)\label{itm:(4)}] $1 \leq b_n  \leq n$  if $n$ is odd and $a_n \neq 0$.
    \item [(5)\label{itm:(5)}] if $a_n = 0$, then $b_n = b_{n-1}.$
    \item [(6)\label{itm:(6)}] $b_n \equiv n \pmod{2}$ if $a_n\neq 0$.
\end{description}

For many integer sequences  $a$, the sequence $b$ is the minimal sequence given by $b_n = 0$ for $n$ even and $b_n = 1$ for $n$ odd, i.e., ${b_n = n \bmod 2}$ (see  Section~\ref{someseq}).
On the other hand, it is impossible to have $b_n = n$ for all $n$ for any integer sequence $a$.  As pointed out to us by Mourad Ismail, it turns out that some related questions have been considered in analysis, e.g., \cite{Ganelius},\cite{KLV},\cite{Korevaar} and \cite{O}. For example, let
$$\sum_{k=0}^{\infty}a_kz^k $$
be a formal power series. The polynomial
$$ S_n(z):=\sum_{k=0}^{n}a_kz^k $$
is called its $n$-th {\it section}. According to \cite{O}, the following theorem goes back to Polya in 1913.
\begin{theorem} (See \cite{KLV} and \cite{O}.) If for the formal power series
\[\sum_{k=0}^{\infty}a_kz^k , \hspace{2mm} a_k > 0, \tag{*} \label{eq:series} \]
all of the roots of the sections $S_n(z)$ are real for all sufficiently large $n$, then the radius of convergence of the power series (*) is infinite.
\end{theorem}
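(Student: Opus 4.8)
The plan is to show that the hypothesis forces the coefficients to decay so fast that $\limsup_{k\to\infty} a_k^{1/k}=0$, which is exactly the assertion that the radius of convergence $R = 1/\limsup_k a_k^{1/k}$ is infinite. First I would record two elementary reductions. Since every $a_k>0$, the polynomial $S_n$ takes strictly positive values at every $z\ge 0$, so none of its roots can be nonnegative; hence for each large $n$ (where real-rootedness holds) we may factor $S_n(z)=a_n\prod_{j=1}^{n}(z+r_j)$ with all $r_j>0$ and with $\deg S_n=n$ exactly, because $a_n>0$. In particular the coefficients of $S_n$ are, up to the factor $a_n$, the elementary symmetric functions of the positive numbers $r_1,\dots,r_n$, which is precisely the setting in which Newton's inequalities apply.

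Next I would invoke Newton's inequalities for the real-rooted polynomial $S_n=\sum_{k=0}^{n}a_kz^k$: writing $p_k=a_k/\binom{n}{k}$, one has $p_k^2\ge p_{k-1}p_{k+1}$ for $1\le k\le n-1$. Rearranging and using the identity $\binom{n}{k}^2/\bigl(\binom{n}{k-1}\binom{n}{k+1}\bigr)=\tfrac{(k+1)(n-k+1)}{k(n-k)}$ gives, for each fixed $k\ge 1$ and every sufficiently large $n$,
\[
a_k^2 \ \ge\ a_{k-1}a_{k+1}\,\frac{(k+1)(n-k+1)}{k(n-k)}.
\]
The right-hand side decreases to $a_{k-1}a_{k+1}\,(k+1)/k$ as $n\to\infty$ (the weight equals $\tfrac{k+1}{k}\bigl(1+\tfrac{1}{n-k}\bigr)$), and since the inequality holds for \emph{all} large $n$ we may pass to the limit to obtain the clean, $n$-free inequality $a_k^2\ge \tfrac{k+1}{k}\,a_{k-1}a_{k+1}$, valid for all $k\ge 1$.

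Finally I would exploit this sharpened log-concavity to control the coefficient ratios. Setting $c_k=a_k/a_{k-1}>0$, the inequality reads $c_{k+1}\le \tfrac{k}{k+1}c_k$, so telescoping from $k=1$ gives $c_m\le c_1/m$ for every $m\ge 1$. Multiplying these bounds yields
\[
a_m \ =\ a_0\prod_{k=1}^{m}c_k \ \le\ a_0\,\frac{c_1^{\,m}}{m!},\qquad c_1=\frac{a_1}{a_0},
\]
and hence $a_m^{1/m}\le a_0^{1/m}\,c_1\,(m!)^{-1/m}\to 0$ by Stirling's formula. Therefore $\limsup_m a_m^{1/m}=0$ and $R=\infty$, as claimed. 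The main obstacle, and the real content of the argument, is the passage from the qualitative hypothesis ``all roots real'' to a quantitative coefficient inequality; Newton's inequalities supply this, and the decisive point is that the accumulated factors $(k+1)/k$ telescope into a factorial, upgrading mere log-concavity to the super-geometric decay needed for an infinite radius of convergence. I would take particular care that the limit in $n$ is handled correctly (the weight is monotone in $n$) and that Newton's inequalities are being applied to a genuinely real-rooted polynomial of degree exactly $n$.
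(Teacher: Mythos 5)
Your argument is correct. Note first that the paper does not actually prove this theorem: it is quoted from the literature (attributed to P\'olya, with pointers to \cite{KLV} and \cite{O}), so there is no in-paper proof to compare against. Your route---observing that $a_k>0$ forces $\deg S_n=n$ and all roots negative, applying Newton's inequalities $p_k^2\ge p_{k-1}p_{k+1}$ with $p_k=a_k/\binom{n}{k}$, computing the weight $\binom{n}{k}^2/\bigl(\binom{n}{k-1}\binom{n}{k+1}\bigr)=\tfrac{(k+1)(n-k+1)}{k(n-k)}$, passing to the $n$-free inequality $a_k^2\ge\tfrac{k+1}{k}\,a_{k-1}a_{k+1}$, telescoping the ratios to $c_m\le c_1/m$, and concluding $a_m\le a_0c_1^m/m!$ so that $a_m^{1/m}\to 0$---is the classical proof of P\'olya's theorem, and every step checks out. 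Two minor remarks: the explicit factorization $S_n(z)=a_n\prod_j(z+r_j)$ is not needed, since Newton's inequalities require only that $S_n$ be a real-rooted real polynomial of degree exactly $n$ (which $a_n>0$ guarantees); and the limit in $n$ is not really needed either, because the weight strictly exceeds $\tfrac{k+1}{k}$ for every finite admissible $n$, so a single sufficiently large $n$ for each $k$ already yields the desired inequality.
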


A stronger result without the assumption that the $a_k$ in $\sum_{k=0}^{\infty}a_kz^k $ are real is given in the following theorem.
\begin{theorem}\label{Ganelius}  (See Ganelius \cite{Ganelius}.)  Let $f_n(z)=\sum_{k=0}^{n}a_kz^k$ be the partial sums of the power series  $f(z)=\sum_{k=0}^{\infty}a_kz^k $, $a_0 = 1$. Let $N(S,n)$ be the number of zeros of the polynomial $f_n(z)$  in a sector $S  = \{z : \,  \beta  \leq arg(z) \leq \alpha\ + \beta \}$ for some $\beta$ and some $\alpha > 0$ and assume $N(S,n)=o(n)$, then  $f(z)$ is an entire function of order zero.
\end{theorem}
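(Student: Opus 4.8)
The plan is to translate the conclusion ``entire of order zero'' into an estimate on the coefficients $a_k$ and then to read that estimate off from the angular distribution of the zeros $\zeta_1^{(n)},\dots,\zeta_n^{(n)}$ of $f_n$. For an entire function the order is $\rho=\limsup_{n\to\infty}\frac{n\log n}{\log(1/|a_n|)}$, so $f$ has order zero exactly when $\frac1n\log(1/|a_n|)$ eventually exceeds every fixed multiple of $\log n$. Writing $f_n(z)=a_n\prod_j(z-\zeta_j^{(n)})$ (assuming $a_n\ne0$, the case $a_n=0$ being absorbed by property~(5)) and using $f_n(0)=a_0=1$ yields the single-product identity $|a_n|=\prod_j|\zeta_j^{(n)}|^{-1}$, so $\log(1/|a_n|)=\sum_j\log|\zeta_j^{(n)}|$. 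The target is therefore equivalent to showing that the geometric mean $|a_n|^{-1/n}$ of the moduli of the zeros of $f_n$ grows faster than every power of $n$. The difficulty is visible already here: the hypothesis $N(S,n)=o(n)$ constrains the \emph{arguments} of the zeros, whereas the conclusion constrains their \emph{moduli}; the whole point of the theorem is that for the \emph{nested} partial sums of a single power series these two are rigidly linked.

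I would first settle the weaker assertion that $f$ is entire, i.e. $|a_n|^{1/n}\to0$, since it already displays the coupling. Introduce the probability measures $\nu_n=\frac1n\sum_j\delta_{\zeta_j^{(n)}}$ and note
\[
\frac1n\log|f_n(z)|=\frac1n\log|a_n|+\int\log|z-\zeta|\,d\nu_n(\zeta)=\int\log\Bigl|\tfrac{z-\zeta}{\zeta}\Bigr|\,d\nu_n(\zeta).
\]
The limiting logarithmic potential of the $\nu_n$ is governed by the growth of $|a_n|$ alone, a quantity that is blind to the arguments of the $a_k$; consequently the equilibrium measure it determines (uniform on the circle of convergence in the classical finite-radius case) is forced to be rotation invariant. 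This is the Jentzsch--Szeg\H{o} mechanism, and it is precisely what degenerates in the order-zero regime. Indeed, if the radius of convergence were finite, or if $f$ were entire of positive finite order $\rho$ (so that the rescaled zeros $\zeta_j^{(n)}/n^{1/\rho}$ have a nontrivial limit distribution), the limit measure would be rotation invariant with full support in angle, and every sector of opening $\alpha>0$ would then capture $\sim\frac{\alpha}{2\pi}\,n$ zeros, contradicting $N(S,n)=o(n)$. I would therefore run the argument in contrapositive form: if $f$ is \emph{not} entire of order zero, the (suitably rescaled) measures $\nu_n$ converge to a rotation-invariant limit charging every arc of directions, which is incompatible with the sector hypothesis.

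The principal obstacle is making the rescaling-and-equidistribution step rigorous uniformly across all cases, most seriously the infinite-order case, where no single polynomial scale $R_n=n^{1/\rho}$ is available and one must argue quantitatively instead. For that I would invoke a sectorial form of Jensen's identity (Carleman's formula) for $f_n$ on $S$: it equates a positive, $\sin$-weighted sum over the zeros lying in $S$ with boundary integrals of $\log|f_n|$ along the two bounding rays and the far arc. The hypothesis $N(S,n)=o(n)$ forces this zero-sum to be $o(n)$, which in turn pins down the growth of $\log|f_n|$ along the rays bordering $S$; a Phragm\'en--Lindel\"of argument then propagates the resulting sub-maximal growth across the whole plane and caps the order of $f=\lim_n f_n$ at zero. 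The delicate points I expect to consume most of the work are the control of the boundary integrals uniformly in $n$ as the arc radius tends to infinity, the bookkeeping for degenerate sections and for zeros lying on $\partial S$, and above all the verification that the limiting zero measure genuinely charges every angular sector; this rotation-invariance is the hinge on which the entire argument turns.
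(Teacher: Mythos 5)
The paper contains no proof of this statement: it is quoted directly from Ganelius's 1963 Duke paper \cite{Ganelius}, so there is no internal argument to compare yours against, and I can only assess your proposal on its own terms. Your framing is correct as far as it goes: the reduction of ``entire of order zero'' to the coefficient criterion $\rho=\limsup_n \frac{n\log n}{\log(1/|a_n|)}$, the identity $|a_n|=\prod_j|\zeta_j^{(n)}|^{-1}$ coming from $f_n(0)=a_0=1$, and the diagnosis that the hypothesis constrains arguments of zeros while the conclusion constrains their moduli are all accurate, and you correctly locate the crux there.

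But the proposal has a genuine gap at precisely that crux. The contrapositive you need --- if $f$ is \emph{not} entire of order zero, then the zeros of the sections have a limiting angular distribution charging every sector --- is an established theorem only in the finite-radius-of-convergence case (Jentzsch--Szeg\H{o}, and even there only along a subsequence of $n$, which you do not note, though a subsequence does suffice to contradict $o(n)$). For $f$ entire of positive finite order, your parenthetical ``so that the rescaled zeros $\zeta_j^{(n)}/n^{1/\rho}$ have a nontrivial limit distribution'' is not true in general: no such limit need exist, and where one does (the Szeg\H{o} curve for $e^z$, say) its full angular support is a function-specific fact, not a consequence of rotation invariance of anything you have exhibited. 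For infinite order there is no polynomial rescaling at all, and your fallback (sectorial Carleman formula plus Phragm\'en--Lindel\"of) is named but not executed; yet this is exactly where the entire content of the theorem lives, so one cannot treat it as a ``delicate point'' to be handled later. A more tractable quantitative route, suggested by the fact that Ganelius had earlier sharpened the relevant inequality, is the Erd\H{o}s--Tur\'an discrepancy bound applied to $f_n(Rz)$ for varying $R>0$: since a sector is cut out purely by argument conditions, $N(S,n)=o(n)$ forces the angular discrepancy of the zeros of $f_n$ to be at least $\frac{\alpha}{2\pi}-o(1)$, which the inequality converts into the uniform lower bound $\log\bigl(\sum_k|a_k|R^k\bigr)\geq cn+\tfrac{n}{2}\log R+\tfrac12\log|a_n|$, from which the required decay of $|a_n|$ can be extracted by optimizing in $R$. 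As written, your text is a research programme whose decisive lemma is assumed rather than proved.
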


If all zeros  of the $n$-sections are on the real line, then obviously there are many sectors which have no zeros at all. Hence, from Theorem~\ref{Ganelius} we have the  following.

\begin{corollary} Under the conditions of Theorem~\ref{Ganelius}, $\lim_{n \to \infty} a_n = 0$. Thus, if the  $a_n$ are integers, $f(z)$ must be a polynomial and $b_n = n$ for all $n$ is not possible.
\end{corollary}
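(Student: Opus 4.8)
The plan is to deduce the corollary directly from Theorem~\ref{Ganelius} by observing that the hypothesis of the theorem is automatically satisfied whenever every section has only real zeros. First I would argue that if $b_n = n$ for all $n$, then for each $n$ every root of $f_n(z)$ lies on the real axis, i.e. has argument $0$ or $\pi$. Consequently, if I pick any sector $S = \{z : \beta \leq \arg(z) \leq \alpha + \beta\}$ whose interior avoids both $0$ and $\pi$ (for instance a thin sector around $\arg(z) = \pi/2$ of small aperture $\alpha$), then $N(S,n) = 0$ for every $n$. In particular $N(S,n) = o(n)$, so Theorem~\ref{Ganelius} applies and $f(z)$ must be an entire function of order zero.

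Next I would extract the conclusion $\lim_{n\to\infty} a_n = 0$ from entireness of order zero. The cleanest route is through the growth of entire functions: an entire function of order $0$ has, for its Taylor coefficients $a_n$, the property that $\limsup_{n\to\infty} \frac{n \log n}{\log(1/|a_n|)} = 0$, which forces $|a_n| \to 0$ (indeed $|a_n|$ decays faster than any geometric rate). Alternatively, and more elementarily, entireness alone gives infinite radius of convergence, so $\limsup |a_n|^{1/n} = 0$; combined with the order-zero refinement one gets $a_n \to 0$. I would state whichever coefficient-decay fact is most standard and cite it, since the paper is treating this as a known analytic input.

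Finally I would specialize to integer coefficients. Since the $a_n$ are integers and $a_n \to 0$, there exists $N$ with $|a_n| < 1$ for all $n \geq N$, hence $a_n = 0$ for all $n \geq N$. Thus $f(z) = \sum_{k=0}^{\infty} a_k z^k$ is actually a polynomial. But a nonzero polynomial of degree $d$ has exactly $d$ roots counting multiplicity, so its sections $f_n$ for $n > d$ coincide with $f_d$ and cannot have $n$ real roots; more directly, once $a_n = 0$ we have $b_n = b_{n-1}$ by property~(5), so $b_n = n$ fails for large $n$. This contradicts the assumption $b_n = n$ for all $n$, completing the proof.

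I expect the main obstacle to be purely expository rather than mathematical: pinning down precisely which standard growth estimate for order-zero entire functions yields $a_n \to 0$, and citing it at the right level of generality so that the step reads as an immediate consequence rather than requiring its own proof. The logical skeleton — real zeros give an empty sector, an empty sector gives order zero, order zero plus integrality forces a polynomial — is straightforward; the only care needed is to invoke the coefficient-decay property of entire functions of order zero without over- or under-stating it.
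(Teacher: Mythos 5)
Your proposal is correct and follows essentially the same route as the paper, which proves the corollary by exactly this chain: all-real zeros give a sector with $N(S,n)=0=o(n)$, Theorem~\ref{Ganelius} then makes $f$ entire (of order zero) so that its coefficients tend to $0$, and integrality forces $a_n=0$ eventually, whence $f$ is a polynomial and $b_n=n$ fails by property~(5). The only difference is one of detail: the paper leaves the coefficient-decay step implicit, while you correctly observe that even the crude fact $\limsup_n |a_n|^{1/n}=0$ (infinite radius of convergence) already suffices, with no need for the sharper order-zero growth estimate.
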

\remark{}However $b_n = n$ for all $n$ is possible if we allow the $a_n$ to  be rational  numbers. For  example, from \cite{KLV}, if we take $c^2 \geq 4$, then
 $$\sum_{k=0}^{n} c^{-k^2}z^k$$
has only real roots for all $n$.

Lest one is left with the impression that the considerations of this paper are solely in the province of analysis, we point out that Stanley \cite{RPS} has considered the combinatorial significance of having all real roots for a polynomial $$a_0 + a_1x + a_2x^2 + \cdots + a_nx^n, \hspace{2mm}  a_i \in  \mathbb{N}.$$
See also Liu and Wang \cite{Wang} for applications to combinatorics.

The organization of this paper is as follows.  In Section~\ref{Computations}, we note that the computation of $RR(a)$ for integer sequences can be accomplished by exact rational number arithmetic.  In Section~\ref{someseq}, we consider several examples where $a_n$ gives rise to the minimal sequence $b_n$. In \ref{kpower}, we conjecture based on numerical evidence that sequence {\bf A346379} in \cite{OEIS}, which we recently authored, is defined for all $n \geq 0$.  This would provide examples of sequences $a_n$ for which $b_n$ starts $0,1,\ldots,m$ for any $m \geq 0$.  In \ref{completereal}, we introduce the notion of a completely real polynomial, which is in a sense the antithesis of the concept considered in \ref{someseq}, and look at some examples.  Further, we provide a way of constructing completely real polynomials of arbitrary degree.   In Section~\ref{proofs}, we prove that ${b_n = n \bmod 2}$ for a general class of sequences and obtain as special cases the minimality of $b_n$ for such well-known sequences as the Catalan numbers, central binomial coefficients and $n!$.  We also establish necessary and sufficient conditions for the minimality of $b_n$ in the case when $a_n$ is a quadratic polynomial in $n$ with arbitrary coefficients and find a general procedure of constructing $a_n$ for which ${b_n = n \bmod 2}$.  Finally, we provide a criterion for determining when an explicitly defined sequence of positive real numbers $a_n$ yields a minimal $b_n$, which can be used to show ${b_n = n \bmod 2}$ in such cases as $a_n=n^n$ and $a_n=2^{n^k}$, where $k>1$ is fixed.

 \section{Possible Generalizations} In this paper, we define $RR(a) = b$ where $b_n$ is the number of real zeros of the $n$-th section $\sum_{i=0}^n a_ix^i$ of the ordinary generating function $G(x) = \sum_{i=0}^{\infty} a_ix^i$ of the sequence $a$. We could just as well  have replaced the ordinary generating function by some other type of generating function, e.g., exponential generating function, Lambert series, Bell series or Dirichlet series, see \cite{G}.  We have not investigated these other generating functions.

 One may also  define the {\it dual} sequence $\tilde{b}_n = n -b_n$, the number of non-real  roots of $\sum_{i=0}^n a_i x^i$. So, for example, the dual  of $b_n = n  \bmod 2$ is the sequence
$$
0, 0, 2, 2, 4, 4, 6, 6, 8, 8, 10, 10, 12, 12, 14, 14, 16, 16, 18, 18, 20,20,22,24,24,\dots
$$

\section{Computational  Considerations} \label{Computations} Given a polynomial $p \in \mathbb{Z}[x]$, the number of real roots may be computed using exact rational number arithmetic. We briefly describe this here. For more details, see \cite{RRI} and \cite{KRS}.  In order to count the number of real roots considering multiplicity, it is convenient to first find the square free decomposition of $p$:  $$p = uf_1^{e_1}f_2 ^{e_2}\cdots f_k^{e_k},$$ where $f_i \in \mathbb{Z}[x]$,  $e_i \in \mathbb{N}$, $|u|$ is  the greatest common divisor of the coefficients of $p$, each $f_i$ has no multiple roots,  i.e., $\gcd(f_i,f_i^\prime)= 1$, and when $i \neq j$, $\gcd(f_i,f_j) = 1$. Maple's procedure {\bf sqrfree} produces this decomposition.
Next, we use a procedure (in Maple, {\bf realroot}) which produces for each $f_i$ a list of pairwise disjoint intervals each containing exactly one root of $f_i$. If $N_i$ is the number of such intervals in the list for $f_i$, then $\sum_{i=1}^{k} N_i e_i$ is the number of real roots of $p$ counting multiplicities of the roots.
Our Maple procedure, {\bf NumRealRoots}, for computing the number of real roots including multiplicities for a polynomial $p \in \mathbb{Z}[x]$ is the following:

\begin{verbatim}
NumRealRoots := proc(p)
local q, k, u;
      if p = 0 then error "zero polynomial not allowed"; fi;
      q := sqrfree(p);
      k := 0;
      for u in q[2] do
          k := k+nops(realroot(u[1]))*u[2]
       end do;
      k
end proc
\end{verbatim}

We note that in {\bf NumRealRoots}, we could have substituted for $${\bf nops(realroot(u[1]))}$$ the Maple procedure based on Sturm's Theorem,
$${\bf sturm(sturmseq(u[1], x), x, -infinity, infinity)},$$
 but this produces a slower procedure.

\section{Motivating Example \label{MotEx}} This paper was motivated by the following problem raised on the Mathematics StackExchange by {\it Russian Bot 2.0} \cite{MSE}. Let $p_n$ denote the $n$th prime for $n \geq 1$ and define the sequence
$$a_n= p_{n+1}, \qquad n  \geq 0.$$
Consider the sequence $b$ where $b_n$ is the number of real roots (counting multiplicities) of the polynomial
$$P_n(x) =2+3x +5x^2 + 7x^3 + \dots + a_nx^n.$$
The sequence $b$ starts out
\begin{equation}
0,1,0,1,0,1,0,1,0,1,0,1,0,1,0,1,0,1,0,1,0,1,0,1,0,1,0,1,\dots
\end{equation}
That is  $b_n = n \bmod 2$.
The question was asked whether or not this pattern continues. It was pointed out that the pattern is broken for the first time for $n=2436$ when $b_{2436} = 2$, and thereafter at least to  $n = 2730$,  $b_n = 2$ for $n$ even and $b_n = 1$ for $n$ odd. How far this new pattern continues is unknown. However, jumping ahead we find that $b_{4000} =  2$,  $b_{4001} = 1$, $b_{5000} = 2$, $b_{5001} =1$, $b_{10000}=2$,  $b_{10001} = 1$, $b_{20000}=2$ and $b_{20001} = 1$.

An anonymous contributor to the answer to this problem, namely, {\it user3733558}, mentioned that the pattern,  $b_n = n \bmod 2$,  seems to hold  at least initially for several other sequences. We consider some examples of such sequences in the next section.

\section{Some Sequences for which $b = RR(a)$ is ${b_n =  n \bmod 2}$\label{someseq}} Here, we give several examples of sequences $a_n$ for which $b_n = n \bmod 2$, where proofs are provided of this result in Section \ref{proofs} below for the sequences in \ref{binom}--\ref{5.7}.

\subsection {Fibonacci Numbers} Let $(0,1,1,2,3,5,\ldots )$ be the sequence of Fibonacci numbers $F_n$. Then we define $a_{n} = F_{n+1}$ for $n \geq 0$. If $b = RR(a)$, it follows  from the results of Garth, Mills and Mitchell \cite{GMM} that $b_n = 0$ for even $n$ and  $b_n = 1$ for odd $n$. We note that the polynomials $p(x)$ of degree $n$  they consider are the reciprocal polynomials $x^nS_n(1/x)$ of  the $n$-th sections $S_n(x)$ of the generating function of $a$, but this  has  no effect on the  number of  real roots as $a_0\neq 0$. Thus ${b_n = n \bmod 2}$ for $n \geq 0$.

\subsection{$k$-Fibonacci Numbers} As in Mansour and Shattuck \cite{MS}, let the recursive sequence ${a_n}$, $n \geq 0$, of order $k,$  $k \geq 2$,  be defined by the initial
values $a_0 = a_1 = \cdots= a_{k-2} = 0$, $a_{k-1} = 1$ and the linear recursion
$$a_n = a_{n-1 }+ \cdots + a_{n-k},  \quad n \geq k.$$
Mansour and Shattuck define, for $k \geq 2$ and $n \geq 1$, the polynomial
$$P_{n,k}(x) = a_{k-1}x^n + a_kx^{n-1}+\cdots + a_{n+k-2}x + a_{n+k-1},$$
and they prove for each $k \geq 2$ that the polynomial $P_{n,k}$ has zero real roots if $n$ is even and one real root if $n$ is odd.
As in the paper of Garth, Mills and Mitchell, these are the reciprocal polynomials of the $n$-th sections  of the  generating  function
$\sum_{i = 0}^{\infty} a_{i+k-1}x^i$ of  the sequence that is of interest to us. But again this has no effect on the number of real roots. So if we begin the sequence at $n=k-1$, we find that ${b_n = n \bmod 2.}$

\subsection {Exponential Sequences $a_n = c^n, c \in \mathbb{R}$ and $c \neq 0$ \label{expon}} Note that this includes the sequence $a_n = 1$ for all $n$. We seek  the number of real zeros of the polynomials
$$ p_n(x):= 1 + (cx)^1 + (cx)^2 + \cdots + (cx)^n = \frac {(cx)^{n+1} - 1}{cx-1}.$$
Note that the polynomial $x^{n+1} - 1$ has only one real root if $n+1$ is odd, namely,  $1$, and two  real roots  $1$ and $-1$ if $n+1$ is  even.
It follows that $p_n(x)$ has zero real  roots if $n$ is  even and one real  root, namely, $x = -1/c$, if $n$ is odd, that is, ${b_n = n \bmod 2}.$

\subsection{Binomial Sequences: For fixed $k \in \mathbb{N}$, $a_n={ {n+k}\choose{k}}$  \label{binom}}
Computations show that ${b_n = n \bmod 2}$ for $k$ from $1$ to $20$ and $n$ from $0$ to $500$.  We thus seek  the number of real zeros $b_n$ of the polynomials
\begin{align*}
p_n(x) &= \sum_{i=0}^n \binom{i+k}{k}x^i\\
&=\frac{1}{(1-x)^{k +1}}-\binom{n +1+k}{k} x^{n +1} {}_{2}F_{1} ( 1,n +2+k ;n +2;x ),
\end{align*}
where the latter expression is given by Maple.

\subsection{$a_n = {n+1}$}\label{5.5}  This corresponds to the $k=1$ case of $\binom{n+k}{k}$ and $(n+1)^k$ from sections \ref{binom} and \ref{kpower}. Computations show that $b = RR(a)$ is the minimal sequence for all $n$ up to $1000$.
Thus, we need to consider the real zeros of the polynomials
 $$  p_n(x) = \frac{1+x^{n +1} \left(\left(n +1\right) x -n -2\right)}{\left(x -1\right)^{2}}.$$

\subsection{$a_n = {(n+1)}^2$}\label{5.6}  This is the case $k=2$ of $(n+1)^k$ from Section~\ref{kpower}. Computations suggest that $b=RR(a)$ is trivial. Thus, we seek  the number of real zeros $b_n$ of the polynomials
\begin{align*}
p_n(x) &= \sum_{i=0}^n {(i+1)^2}x^i\\
&=\frac{\left(n +1\right)^{2} x^{n+3}-\left(2 n^{2}+6 n +3\right)x^{n+2}+\left(n +2\right)^{2}x^{n+1}-x -1}{\left(x -1\right)^{3}}.
\end{align*}

\subsection{Catalan Numbers: $a_n = \frac{1}{n+1}\binom{2n}{n}$}\label{5.7}  Again, computations suggest ${b_n =  n \bmod 2}$ for all $n$. We remark that in \cite{Sh}, it was proven that the Motzkin, Schr\"{o}der and some related sequences each give rise to the minimal $b_n$ sequence.

\section{$k$-th Power Sequences $a_n = (n+1)^k$ for $k \geq 1$  \label{kpower}} Consider a typical sequence of this type: Let $k=50$, then  for $a_n = (n+1)^{50}$, the sequence  $b=RR(a)$ begins:
$$
\begin{array}{cccccccccccccccccccc}
0 ,& 1 ,& 2 ,& 3 ,& 4 ,& 5 ,& 6 ,& 5 ,& 6 ,& 7 ,& 6 ,& 7 ,& 8 ,& 9 ,& 8
\\
 9 ,& 8 ,& 9 ,& 10 ,& 9 ,& 10 ,& 11 ,& 10 ,& 11 ,& 10 ,& 11 ,& 12 ,& 11 ,& 12 ,& 11
\\
 12 ,& 13 ,& 12 ,& 13 ,& 12 ,& 13 ,& 14 ,& 13 ,& 14 ,& 13 ,& 14 ,& 13 ,& 14 ,& 13 ,& 14
\\
 15 ,& 14 ,& 15 ,& 14 ,& 15 ,& 14 ,& 15 ,& 14 ,& 15 ,& 16 ,& 15 ,& 16 ,& 15 ,& 16 ,& 15
\\
 16 ,& 15 ,& 16 ,& 15 ,& 16 ,& 17 ,& 16 ,& 17 ,& 16 ,& 17 ,& 16 ,& 17 ,& 16 ,& 17 ,& 16
\\
 17 ,& 16 ,& 17 ,& 16 ,& 17 ,& 16 ,& 17 ,& 18 ,& 17 ,& 18 ,& 17 ,& 18 ,& 17 ,& 18 ,& 17
\\
 18 ,& 17 ,& 18 ,& 17 ,& 18 ,& 17 ,& 18 ,& 17 ,& 18 ,& 17 ,& 18 ,& 19 ,& 18 ,& 19 ,& 18
\\
 19 ,& 18 ,& 19 ,& 18 ,& 19 ,& 18 ,& 19 ,& 18 ,& 19 ,& 18 ,& 19 ,& 18 ,& 19 ,& 18 ,& 19
\\
 18 ,& 19 ,& 18 ,& 19 ,& 18 ,& 19 ,& 18 ,& 19 ,& 18 ,& 19 ,& 20 ,& 19 ,& 20 ,& 19 ,& 20
\\
 19 ,& 20 ,& 19 ,& 20 ,& 19 ,& 20 ,& 19 ,& 20 ,& 19 ,& 20 ,& 19 ,& 20 ,& 19 ,& 20 ,& \cdots
\end{array}
$$
Note that $b_n = n$ for $n = 0,1,2,3,4,5,6$.

Computations suggest that for all $n$, there is a $k$ such that if $a_n = (n+1)^k$, then the sequence $b = RR(a)$ satisfies
$b_i = i$ for $i = 0, 1, \ldots,n$. For the smallest such $k$, we write   $c_n = k$.
 The values for $c_n$  up to $n = 27$ are given by Table~\ref{table:1}.
 See the new sequence {\bf A346379} in the OEIS \cite{OEIS} for the most recent values.  We conjecture that $c_n$ is indeed defined for all $n \geq 0$.
A Maple calculation shows that for $n = 125$ and $k = 20000$ the number of real roots of $\sum_{i=0}^j (i+1)^kx^i$ is $j$ for $j = 0,1,\dots,n$. This shows that $c_n$ is defined at least up to $n = 125$.

\begin{table}
\begin{center}
\begin{tabular}{ |c|c||c| c||c|c|c|c|}
 \hline
n & $c_n$ &n&$c_n$&n&$c_n$&n&$c_n$\\
\hline
0 & 0 & 7 & 63 & 14 & 242 & 21 & 536\\
1 & 0 & 8 & 81 & 15 & 277 & 22 & 587\\
2 & 5 & 9 & 102 & 16 & 314 & 23 & 641\\
3 & 12 & 10 & 125 & 17 & 354 & 24 & 697\\
4 & 21 & 11 & 151 & 18 & 396 & 25 & 755\\
5 & 33 & 12 & 179 & 19 & 440 & 26 & 816\\
6 & 47 & 13 & 209 & 20 & 487 & 27 & 879\\
 \hline
\end{tabular}
\end{center}
\caption{}
\label{table:1}
\end{table}

For example, since $c_{10} = 125$, the sequence $b$ corresponding to $a_i = (i+1)^{125}$ begins:
$$
\begin{array}{ccccccccccccccccc}
0 & 1 & 2 & 3 & 4 & 5 & 6 & 7 & 8 & 9 & 10 & 9 & 10 & 11 & 12 & 11 & 12
\\
 13 & 14 & 13 & 14 & 15 & 16 & 15 & 16 & 17 & 18 & 17 & 18 & 19 & 18 & 19 & 20 & 19
\\
 20 & 21 & 20 & 21 & 22 & 21 & 22 & 23 & 22 & 23 & 24 & 23 & 24 & 23 & 24 & 25 & 24
\\
 25 & 26 & 25 & 26 & 27 & 26 & 27 & 26 & 27 & 28 & 27 & 28 & 27 & 28 & 29 & 28 & 29
\\
 28 & 29 & 30 & 29 & 30 & 29 & 30 & 31 & 30 & 31 & 30 & 31 & 30 & 31 & 32 & 31 & 32
\end{array}
$$

\section{Completely Real Polynomials \label{completereal}}

\begin{definition}
We shall say that a polynomial $p = \sum_{i=0}^na_ix^i  \in \mathbb{R}[x]$ is {\bf completely real} if all sections $p_i= \sum_{j=0}^ia_jx^j$ for $0 \leq i \leq n$ have $i$ real roots. As in the case of  formal power series, we write $b_i$ for  the number of  real roots  of  $p_i.$
\end{definition}

In this section, we exhibit some  completely real  polynomials in $\mathbb{Z}[x]$. The first example  shows that for all natural numbers $n$ there exist completely real polynomials in $\mathbb{Z}[x]$ of degree $n$.

\remark{} While the reciprocal  polynomial $p^*(x) = x^np(1/x)$ of a completely real polynomial $p(x)$ is often completely real, this is not always the case. For example, the polynomial $p = x^{3}+99 x^{2}+2456 x +13827$ is completely real, but its reciprocal $p^* = 13827 x^{3}+2456 x^{2}+99 x +1$ is not.

\subsection{$\sum_{k=0}^{n} c^{n^2-k^2}z^k$, $c^2 \geq 4$}
As pointed out in  \cite{KLV},  if $c^2 \geq 4$, then for all $i \geq 0$ the polynomials
 $$\sum_{k=0}^{i} c^{-k^2}z^k$$
have only real roots.
Clearly then for $n \in \mathbb{N}$, the polynomials
$$c^{n^2} \sum_{k=0}^{i} c^{-k^2}z^k = \sum_{k=0}^{i} c^{n^2-k^2}z^k$$
have only real roots. The latter are the $i$-sections of the polynomial
$$\sum_{k=0}^{n} c^{n^2-k^2}z^k,$$
which is therefore completely real and in $\mathbb{Z}[x]$ if
$c \in \mathbb{Z}.$

\subsection{$\sum_{i=0}^n (i+1)^kx^i$ if $c_n = k$} From Section~\ref{kpower}, it follows that if $c_n = k$, then the polynomials
$$\sum_{i=0}^n (i+1)^kx^i$$
are   completely real.

\subsection{$p={\prod}_{i=1}^{n} \left(i^{k} x +1\right)$, $k \in \mathbb{N}$}
These polynomials in $\mathbb{Z}[x]$ are designed to have $n$ real roots $-i^{-k}$, $i = 1,2, \ldots, n$. It turns out that for the pairs $(n,k)$ in Table~\ref{table:2}, computations show that $b_i = i$ for  $i = 0,1, \ldots,n$. In a pair $(n,k)$, $k$ is the smallest number that works for $n$.
\begin{table}
\begin{center}
\begin{tabular}{ |c|c||c| c||c|c|}
 \hline
n & k &n&k&n&k \\
\hline
1 & 0 & 16 & 17 & 31 & 34 \\
2 & 0 & 17 & 18 & 32 & 35 \\
3 & 2 & 18 & 19 & 33 & 36 \\
4 & 4 & 19 & 20 & 34 & 37 \\
5 & 5 & 20 & 21 & 35 & 39 \\
6 & 6 & 21 & 22 & 36 & 40 \\
7 & 7 & 22 & 24 & 37 & 41 \\
8 & 8 & 23 & 25 & 38 & 42 \\
9 & 9 & 24 & 26 & 39 & 43 \\
10 & 10 & 25 & 27 & 40 & 44 \\
11 & 11 & 26 & 28 & 41 & 45 \\
12 & 12 & 27 & 29 & 42 & 47 \\
13 & 14 & 28 & 30 & 43 & 48 \\
14 & 15 & 29 & 32 & 44 & 49 \\
15 & 16 & 30 & 33 & 45 & 50 \\
 \hline
\end{tabular}
\end{center}
\caption{}
\label{table:2}
\end{table}

We know of no reason that the table  cannot  be  continued indefinitely.

\subsection{Eulerian Polynomials} The Eulerian numbers \cite{Eulerian} are given by $$A(n,m)=\sum_{k=0}^{m+1} (-1)^k\binom{n+1}{k}(m+1-k)^n$$ and the Eulerian polynomials \cite{Eulerian} by $$A_n(x) = \sum_{m=0}^{n-1} A(n,m)x^m.$$
Direct computer calculations for $n$ from  $0$ to $9$ show that $A_n(x)$ is completely real. But for $n > 9$ for those  $n$ tested, $A_n(x) $ has a few $i$-sections which have non-real roots. However, as mentioned in Stanley \cite{RPS}, $A_n(x)$ has only real roots. Note that from \cite{Eulerian} the Eulerian polynomials $A_n(x)$ are connected to the generating function of the sequence $a_i = (i+1)^k$ via
$$\sum_{i=0}^{\infty}(i+1)^kx^i= \frac{A_k(x)}{(1-x)^{k+1}},$$
which may help to explain some of the behavior in Section~\ref{kpower}.

\subsection{A Recursive Construction for Completely Real Polynomials}

The following result provides a way of constructing a completely real polynomial of arbitrary degree.

\begin{theorem}\label{distinctroots}
If $p(x)=\sum_{i=0}^n a_ix^i \in \mathbb{R}[x]$ is of degree $n\geq 1$ with all its roots real and distinct,  then there exists a nonzero $u \in \mathbb{R}$ such that $q(x)=p(x)+ux^{n+1}$ has all its roots real and distinct.
\end{theorem}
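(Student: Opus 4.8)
The plan is to exploit the fact that $p$ has $n$ distinct real roots, which means its roots partition the real line into intervals on which $p$ alternates sign, and to show that adding a small enough perturbation $ux^{n+1}$ preserves a sign alternation that forces $n+1$ real roots. First I would label the roots of $p$ as $r_1 < r_2 < \cdots < r_n$, so that $p(x) = a_n\prod_{i=1}^n(x-r_i)$ with $a_n \neq 0$. Because the roots are simple, $p$ changes sign at each $r_i$; equivalently, if I pick one test point in each of the $n-1$ bounded gaps $(r_i, r_{i+1})$, the values of $p$ at consecutive test points have opposite signs. The idea is that for $|u|$ small the function $q(x) = p(x) + ux^{n+1}$ will still have these same opposite signs at the same test points (since $q \to p$ uniformly on any compact set as $u \to 0$), yielding at least $n-2$ interior roots by the intermediate value theorem, and then I must account for the two ``outer'' roots using the higher degree and the end behavior of a degree $n+1$ polynomial.

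The cleaner way to organize this is to track sign changes across a list of $n+2$ suitably chosen points. Concretely, I would take the $n$ roots $r_1,\dots,r_n$ themselves together with two additional points, one far to the left and one far to the right, say $-M$ and $+M$ for large $M$. At each root $r_i$ we have $p(r_i)=0$, so $q(r_i) = u\,r_i^{\,n+1}$, whose sign is controlled by the sign of $u$ and the parity of $n+1$ together with the sign of $r_i$. The delicate issue is that some $r_i$ could be zero, making $q(r_i)=0$ as well; I would handle this either by first translating $p$ so that none of its roots is $0$ (a shift $x \mapsto x - t$ preserves the hypotheses and the conclusion, since it does not change the number of real roots of either polynomial and the top two coefficients transform controllably) or by a direct argument. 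Assuming no root is zero, the nonzero values $q(r_i) = u\,r_i^{\,n+1}$ can be arranged to alternate in sign along $r_1 < \cdots < r_n$ for a suitable choice of the sign of $u$, by choosing $u$ so that $u\,r_i^{\,n+1}$ matches the sign that $p$ would have had just past $r_i$; this is where I must verify that a single global choice of $\operatorname{sign}(u)$ does the job, which is the main subtlety.

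To make the alternation argument rigorous I would compare the sign of $q(r_i)=u r_i^{n+1}$ with the sign of $p$ on the adjacent open intervals. Since $p$ has simple roots, on the interval $(r_i, r_{i+1})$ the polynomial $p$ has a constant sign $\epsilon_i \in \{+1,-1\}$ with $\epsilon_i = -\epsilon_{i-1}$. The claim is that one can pick the sign and magnitude of $u$ so that $\operatorname{sign}(q(r_i))$ equals the sign of $p$ on the interval immediately to one fixed side of $r_i$; this forces $q$ to change sign between each $r_i$ and a nearby point, producing many roots. The key step, and the one I expect to be the main obstacle, is arranging the signs consistently at all $n$ roots simultaneously using one value of $u$: because $r_i^{n+1}$ changes sign as $r_i$ passes through $0$, a naive choice need not alternate. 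This is precisely why normalizing the roots to lie all on one side of $0$ (via the shift) is valuable — then $r_i^{n+1}$ has a single sign for all $i$, and a single choice of $\operatorname{sign}(u)$ produces a clean alternation.

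Finally, I would close the argument by a degree/counting bound. Having produced $n$ sign alternations of $q$ among the shifted roots and the two far-away points $\pm M$ (where the $ux^{n+1}$ term dominates and dictates the sign by end behavior), the intermediate value theorem yields at least $n+1$ distinct real roots of $q$; since $\deg q = n+1$, these are exactly $n+1$ real roots and they are necessarily simple, giving the desired conclusion. The only remaining care is to choose $|u|$ small enough that the perturbation does not destroy the interior sign pattern inherited from $p$ (so that the far-field behavior and the near-root behavior are compatible), and to confirm distinctness follows automatically from the count matching the degree. I would present the shift reduction first, then the sign-alternation lemma, then the IVT count, noting that the existence of a valid threshold for $|u|$ is a routine compactness/continuity statement rather than the crux.
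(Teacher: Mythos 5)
There is a genuine gap at the heart of your ``cleaner'' organization. You propose to read off sign changes of $q$ at the $n+2$ points $-M<r_1<\cdots<r_n<M$, using $q(r_i)=u\,r_i^{\,n+1}$, and you claim that after shifting so that all roots of $p$ lie on one side of $0$, a single choice of $\operatorname{sign}(u)$ makes these values alternate in sign. The opposite is true: once all $r_i$ are, say, positive, the quantities $r_i^{\,n+1}$ all have the same sign, so the $n$ values $q(r_i)=u\,r_i^{\,n+1}$ all share one sign for any fixed $u$. The sequence $q(-M),q(r_1),\dots,q(r_n),q(M)$ then has at most two sign changes, yielding at most two roots by the intermediate value theorem rather than the $n+1$ you need. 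What you would actually need is for $\operatorname{sign}(q(r_i))$ to agree with the alternating quantity $\operatorname{sign}(a_n)(-1)^{n-i}$ (the sign of $p$ just to the right of $r_i$), and no single $u$ can accomplish that once $n\geq 2$. You correctly flag this as ``the main subtlety,'' but your proposed resolution (the shift) makes the signs $q(r_i)$ constant rather than alternating, so the argument collapses at exactly the point you identified.

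The repair is the route you sketch in your first paragraph and then abandon, and it is the paper's proof: evaluate $q$ not at the roots of $p$ but at test points where $p$ is nonzero. Choose $s_1\in(-\infty,r_1)$, $s_i\in(r_{i-1},r_i)$ for $2\leq i\leq n$, and $s_{n+1}\in(r_n,\infty)$; since the roots of $p$ are simple, the signs of $p(s_1),\dots,p(s_{n+1})$ alternate. Taking $|u|$ small enough that $|u\,s_i^{\,n+1}|<|p(s_i)|$ for all $i$ forces $q(s_i)$ to have the same sign as $p(s_i)$, so the $q(s_i)$ alternate and $q$ has a root in each interval $(s_i,s_{i+1})$, giving $n$ distinct real roots already (note the unbounded intervals are used here, which is why you get $n$ roots rather than the $n-2$ your bounded-gap count produces). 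The $(n+1)$-st root then comes from a parity comparison with the end behavior: if $n$ is even, $q(s_1)$ and $q(s_{n+1})$ have the same sign while $\deg q=n+1$ is odd, so $q$ has opposite signs at $\pm\infty$ and must vanish once more in $(-\infty,s_1)$ or $(s_{n+1},\infty)$; if $n$ is odd, $q(s_1)$ and $q(s_{n+1})$ have opposite signs while the limits at $\pm\infty$ agree, with the same conclusion. No shift is needed (simply take the $s_i$ nonzero if you wish), and the conclusion holds for every sufficiently small nonzero $u$ of either sign.
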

\begin{proof}
Let $r_1<r_2<\cdots<r_n$ denote the roots of $p(x)$. Let $s_1 \in (-\infty,r_1)$, $s_i\in (r_{i-1},r_i)$ for $2 \leq i \leq n$ and $s_{n+1}\in(r_n,\infty)$.  Note that $p(s_i)\neq 0$ for all $i$.  Let $u$ be small enough (in absolute value) so that $|us_{i}^{n+1}|<|p(s_i)|$ for $ 1 \leq i \leq n+1$, which implies $q(s_i)=p(s_i)+us_i^{n+1}$ is nonzero and has the same sign as $p(s_i)$.  Therefore, the signs of the $q(s_i)$ for $i \in [n+1]$ alternate since the signs of the $p(s_i)$ do so (as the roots of $p$ are distinct).  This implies $q$ has a root in each interval $(s_i,s_{i+1})$ for $1\leq i \leq n$.  Note that $q(s_1)$ and $q(s_{n+1})$ have the same sign if $n$ is even and opposite signs if $n$ is odd.  So if $n$ is even, then there must exist an $(n+1)$-st real root of $q$ in either the interval $(-\infty,s_1)$ or $(s_{n+1},\infty)$ since $\lim_{x\rightarrow -\infty}(q(x))$ and $\lim_{x\rightarrow \infty}(q(x))$ have opposite signs in this case, which implies $q$ has all its roots real and distinct.  If $n$ is odd, then the same conclusion is reached since $q(s_1)$ and $q(s_{n+1})$ have opposite signs in this case with $\lim_{x\rightarrow -\infty}(q(x))$ and $\lim_{x\rightarrow \infty}(q(x))$ of the same sign.
\end{proof}

\remark{} If $q$ has real distinct roots for some nonzero $u \in \mathbb{R}$, then the same is seen to hold for all $v$ in the interval $[-|u|,|u|]$. \medskip

We wonder if it is possible to extend Theorem \ref{distinctroots} to the case where $p$ has all its roots real, but with repeated roots allowed wherein we are seeking nonzero $u$ such that $q$ has all of its roots real.  The following result would require that the multiplicity of each root of $p$ be at most two.

\begin{proposition}
Let $u$ be a nonzero real number and $n \geq 3$.  Then the polynomial $q(x)=(x-1)^n+ux^{n+1}$ cannot have all of its roots real.
\end{proposition}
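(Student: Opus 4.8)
The plan is to reduce the problem to counting the real solutions of a single scalar equation by means of the reciprocal (coefficient-reversing) substitution. Since $q(0)=(-1)^n\neq 0$, the value $x=0$ is never a root, so I would set $t=1/x$ and observe that
$$q(x)=x^{n+1}\Bigl(t(1-t)^n+u\Bigr),\qquad t=1/x.$$
Writing $\tilde\phi(t)=t(1-t)^n+u$, this says precisely that $q$ is the reciprocal polynomial of $\tilde\phi$. Because the leading coefficient $(-1)^n$ of $\tilde\phi$ and its constant term $\tilde\phi(0)=u$ are both nonzero, the map $t\mapsto 1/t$ is a multiplicity- and reality-preserving bijection between the roots of $\tilde\phi$ and those of $q$. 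Hence the number of real roots of $q$ (with multiplicity) equals the number of real solutions of $\phi(t):=t(1-t)^n=-u$ (with multiplicity). It therefore suffices to show that, for $-u\neq 0$, this equation has at most three real solutions, since $\deg q=n+1\geq 4$ whenever $n\geq 3$.

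Next I would analyze the shape of $\phi$ through its derivative, which factors cleanly as
$$\phi'(t)=(1-t)^{n-1}\bigl(1-(n+1)t\bigr).$$
Thus $\phi$ has exactly two distinct real critical points, $t_\ast=\tfrac{1}{n+1}$ and $t=1$, and is strictly monotone on each of the three intervals $(-\infty,t_\ast)$, $(t_\ast,1)$, $(1,\infty)$. On each such open interval $\phi(t)=-u$ has at most one solution, and there it is simple because $\phi'\neq 0$; this already bounds the number of \emph{simple} real solutions by three.

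The key point, and the step I expect to carry the real weight, is the multiplicity bookkeeping at the critical points, and this is exactly where the hypothesis $u\neq 0$ enters. A real solution of multiplicity at least two must be a common zero of $\phi(t)+u$ and $\phi'$, hence equal to $t_\ast$ or to $1$. But $\phi(1)=0$, so $t=1$ solves $\phi(t)=-u$ only when $u=0$, which is excluded; this removes the dangerous high-order critical point $t=1$ (where $\phi'$ vanishes to order $n-1$) from consideration entirely. The only remaining possibility is a double root at $t_\ast$, occurring exactly when $-u=\phi(t_\ast)$, and since $t_\ast$ is a simple zero of $\phi'$ one has $\phi''(t_\ast)\neq 0$, so the multiplicity there is exactly two. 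Because $\phi(t_\ast)>0$ is the local maximum value, a short sign analysis on the three monotone pieces shows that in this tangential case $\phi(t)=-u$ still has total real multiplicity at most three (the double root at $t_\ast$, plus at most one further simple root on $(1,\infty)$ when $n$ is even).

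Combining the two cases, $\phi(t)=-u$ has at most three real solutions counting multiplicity whenever $u\neq 0$, so $q$ has at most three real roots. As $n\geq 3$ forces $\deg q=n+1\geq 4$, the polynomial $q$ must have a non-real root, which is the assertion. The main obstacle is thus not the geometry of $\phi$ but the careful accounting of multiplicities at $t=1$; the whole argument hinges on the observation that $\phi$ vanishes there, so that the multiplicity-$(n-1)$ critical point contributes nothing once $u\neq 0$, mirroring the remark that only multiplicities up to two can be accommodated.
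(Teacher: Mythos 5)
Your argument is correct, and it reaches the conclusion by a genuinely different route from the paper's. The paper works directly with $q(x)$: it first identifies the unique possible double root (solving $q(r)=q'(r)=0$ to get $r=n+1$, $u=-\frac{n^n}{(n+1)^{n+1}}$), then splits into the cases $u>0$ and $u<0$ and, within each, on the parity of $n$, using Descartes' rule of signs for $u>0$ and, for $u<0$, an auxiliary ratio $h(x)=\frac{(1-1/x)^n}{-ux}$ whose logarithmic derivative shows it is unimodal on the relevant intervals; this yields the exact counts ($0$ or $2$ real roots for $n$ odd, $1$ or $3$ for $n$ even). You instead invert the variable at the outset, which collapses everything into the single equation $\phi(t)=t(1-t)^n=-u$ with the clean factorization $\phi'(t)=(1-t)^{n-1}\bigl(1-(n+1)t\bigr)$; strict monotonicity on the three resulting intervals, together with the observation that $\phi(1)=0\neq -u$ kills the high-order critical point, gives the uniform bound of three real roots with no case split on the sign of $u$ and almost none on the parity of $n$. (Your equation is in fact the paper's $h(x)=1$ rewritten under $t=1/x$, and your critical point $t_\ast=\frac{1}{n+1}$ is the paper's $r=n+1$; the local maximum value $\phi(t_\ast)=\frac{n^n}{(n+1)^{n+1}}$ recovers their exceptional $u$.) What your packaging buys is uniformity and brevity, and it avoids Descartes' rule entirely; what the paper's buys is the sharper information of exactly how many real roots occur in each regime, which your sketch could also extract but does not need. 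The one place you should flesh out in a final write-up is the ``short sign analysis'' in the tangential case $-u=\phi(t_\ast)$: since $t_\ast$ is a strict local maximum, $\phi<-u$ on both adjacent intervals, so the only contributions are the double root at $t_\ast$ and at most one simple root on $(1,\infty)$, confirming the bound of three.
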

\begin{proof}
First note that if $r$ is a multiple root of $q$ iff $q(r)=q'(r)=0$.  Solving for $u$ and $r$ simultaneously then gives $u=-\frac{n^n}{(n+1)^{n+1}}$ and $r=n+1$.  Further, we have $q''(n+1)\neq 0$ (as $q''(n+1)=0$ is seen to be impossible), which implies the multiplicity of $r=n+1$ is exactly two.  So assume $u\neq-\frac{n^n}{(n+1)^{n+1}}$, in which case the roots of $q$ are all distinct.

We first rule out $u>0$.  If $n$ is odd, then $q$ has one negative root, by Descartes' rule.  Further, $q'(x)=n(x-1)^{n-1}+(n+1)ux^n>0$ for $x>0$, with $q(0)=-1<0$ and $\lim_{x\rightarrow\infty}(q(x))=\infty$, implies $q$ has only one positive root.  Thus $q$ has two real roots altogether since $u>0$ implies $q$ has no repeated roots.  If $n$ is even and $u>0$, then $q(x)>0$ for $x \geq 0$.  If $x<0$, then $q(-x)$ has one sign change and hence one negative root.  Thus $q$ has only one real root when $n$ is even and $u>0$.

Assume now $u<0$ with $u\neq -\frac{n^n}{(n+1)^{n+1}}$.  If $n$ is odd, then $q(x)<0$ for $x \leq 1$, so we consider possible roots $r$ on the interval $(1,\infty)$.  Note that $q(r)=0$ where $r>1$ iff $h(r)=1$, where
$$h(x)=\frac{f(x)}{g(x)} \text{ for } x>1, \text{ with } f(x)=\left(1-\frac{1}{x}\right)^n \text{ and } g(x)=-ux.$$
Upon considering $\ln(h(x))$, we have that $h(x)$ is increasing on $(1,n+1)$ and decreasing on $(n+1,\infty)$, with $h(1)=0=\lim_{x\rightarrow \infty}(h(x))$.  Thus, the equation $h(x)=1$ is satisfied by either zero or two numbers in $(1,\infty)$, for if $h(x)=1$ has exactly one solution $t$, then it must be that $t=n+1$ in which case $u=-\frac{n^n}{(n+1)^{n+1}}$ contrary to assumption.  This then implies $q$ has zero or two (distinct) roots on $(1,\infty)$, and hence zero or two real roots.

On the other hand, if $n$ is even, then $q(x)>0$ if $x \leq 0$ for $u<0$.  Consider once again the function $h(x)$.  Note that $h(x)$ is decreasing on $(0,1)$, increasing on $(1,n+1)$ and decreasing on $(n+1,\infty)$.  Then $\lim_{x \rightarrow 0^+}(h(x))=\infty$ and $h(1)=0$ implies $q$ has a single root on $(0,1)$.  Thus, reasoning as before, we have that $q$ has one or three real roots altogether (counting multiplicities) when $n$ is even.  Finally, if $u=-\frac{n^n}{(n+1)^{n+1}}$, then $r=n+1$ is a double root and it is seen that $q$ has two or three real roots  altogether (counting multiplicities) depending on if $n$ is odd or even.
\end{proof}

\remark{} Note that the prior result is not true for $n=2$, i.e., there exists nonzero $u$ (all lying in the interval $(-1/6,0)$) for which $q(x)=(x-1)^2+ux^3$ has all its roots real.  Based on numerical evidence, it appears that such $u$ can be found in cases when $p(x)$ has each of its roots real with multiplicity one or two, though we do not have a complete proof.

\section{Proofs for Sequences from \ref{binom}--\ref{5.7} and Others that ${b_n = n \bmod 2}$\label{proofs}}

Our first result below provides necessary and sufficient conditions for $b_n$ to be trivial in the case when $a_n$ is a quadratic polynomial in $n$ and accounts for the behavior witnessed above in subsections \ref{5.5} and \ref{5.6}.

\begin{theorem}\label{quadraticseq}
Let $a_n=an^2+bn+c$, where $a,b, c \geq 0$ are real numbers.  Then in order for $b_n$ to be the trivial sequence, it is necessary and sufficient that one of the following conditions be satisfied:
\begin{align*}
(\text{I})&~b>a \geq 0  \text{ and } b/2 \leq c <b-a, \\
(\text{II})&~b>a \geq 0,~c=b-a \text{ and } b \geq 2a,\\
(\text{III})&~c> \max\{0,b-a\}\text{ and either } (\text{i})~c \geq \frac{a+b}{2},~(\text{ii})~\frac{b}{2}\leq c \leq \frac{3b-a}{4} \text{ or }\\
&~(\text{iii})~\frac{3b-a}{4}<c<\frac{a+b}{2},~8ac \geq (a+b)^2 \text{ and the polynomial }
q_n(x) \text{ defined}\\
 &~\text{below has one root in } (-1,0) \text{ for all } n\geq1 \text{ odd.}
\end{align*}
\end{theorem}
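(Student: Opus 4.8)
The plan is to turn the root count into a question about a fixed, sparse auxiliary polynomial and then run a case analysis driven by the signs of its coefficients. First I would multiply by $(1-x)^3$, the cube of the denominator of the ordinary generating function $\sum_{i\ge0}(ai^2+bi+c)x^i$. Since the third finite difference of a quadratic vanishes, all the interior coefficients telescope and one is left with
$$(1-x)^3p_n(x)=N(x)-x^{n+1}\widetilde{M}_n(x),$$
where $N(x)=c+(a+b-2c)x+(a-b+c)x^2$ is independent of $n$ and $\widetilde{M}_n$ is an explicit quadratic with coefficients built from $a_{n-2},a_{n-1},a_n,a_{n+1}$. Because every $a_i\ge0$ and $a_0=c>0$ (which each of (I)--(III) forces), we have $p_n(x)>0$ for $x\ge0$, so all real roots are negative; substituting $x=-t$ reduces the problem to counting the positive roots of
$$Q_n(t):=(1+t)^3p_n(-t)=N(-t)+(-1)^n t^{n+1}M_n(-t),$$
where $N(-t)=(a-b+c)t^2+(2c-a-b)t+c$ and $M_n(-t)=a_nt^2+(3a_n-a_{n-1})t+a_{n+1}>0$ for $t\ge0$.

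The whole statement is then governed by three scalars attached to $N(-t)$: its value $N(-1)=2(2c-b)$, its discriminant $(a+b)^2-8ac$, and the location of its vertex, which sits exactly at $t=1$ when $c=(3b-a)/4$. These reproduce the thresholds $c\ge b/2$, $8ac\ge(a+b)^2$, and the comparison of $c$ with $(3b-a)/4$ in the statement, while the sign of the leading coefficient $a-b+c$ separates (I) (negative), (II) (zero) and (III) (positive). For sufficiency I would show that each listed condition forces $N(-t)\ge0$ on $(0,1]$ (for instance $N(-1)\ge0$ gives $c\ge b/2$, and $8ac\ge(a+b)^2$ with $a-b+c>0$ makes $N(-t)\ge0$ everywhere). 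Then for even $n$ both summands of $Q_n$ are nonnegative on $(0,1]$ and the $t^{n+1}$ term dominates on $(1,\infty)$, so $Q_n>0$ and $b_n=0$; for odd $n$ the sign flip sends $Q_n$ from $Q_n(0)=c>0$ to $-\infty$, forcing an odd number of positive roots that I must pin to exactly one.

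The heart of the matter is the uniqueness of that positive root for odd $n$. In cases (I), (II) and (III)(i)--(ii) I expect it to follow from monotonicity: when the vertex of $N(-t)$ lies at or beyond $t=1$ (equivalently $c\le(3b-a)/4$ in the $a-b+c>0$ regime, and automatically when $a-b+c\le0$), $N(-t)$ is nonincreasing on $(0,1)$, so $Q_n(t)=N(-t)-t^{n+1}M_n(-t)$ is strictly decreasing there and crosses zero at most once, with a separate gap estimate ruling out roots on $[1,\infty)$. I expect the genuine obstacle to be case (III)(iii), where the vertex falls inside $(0,1)$ and $N(-t)$ dips to a near-tangency with the axis (the hypothesis $8ac\ge(a+b)^2$ keeps this dip nonnegative, and that inequality is itself forced by the even-$n$ requirement). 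There $N$ is no longer monotone on $(0,1)$, so for small odd $n$ the perturbation $t^{n+1}M_n(-t)$ is not negligible near the near-double root of $N$ and can manufacture two extra crossings; I do not expect any closed-form inequality in $a,b,c$ to separate ``one root'' from ``three roots'' uniformly in $n$, which is precisely why the theorem must carry the explicit hypothesis that $q_n$ have a single root in $(-1,0)$ for every odd $n\ge1$.

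For necessity I would run these implications backwards. If $N(-t)<0$ somewhere in $(0,1)$, then for all large even $n$ the vanishing of the $t^{n+1}$ term makes $Q_n$ inherit that sign and cross zero, producing a forbidden real root of $p_n$ in $(-1,0)$; this yields $N(-1)\ge0$, hence $c\ge b/2$, together with the discriminant constraint, and the vertex position then dictates which of (I)--(III) must hold. The residual odd-$n$ count in the tangential regime can again be captured only by the stated root condition on $q_n$, so necessity closes the logical loop exactly where sufficiency left it. The main risk in executing this plan is the bookkeeping needed to confirm that the three boundary quantities above carve $\{a,b,c\ge0\}$ into precisely the listed regions, and to make the ``gap estimate'' on $[1,\infty)$ uniform across all $n$ rather than only asymptotically large $n$.
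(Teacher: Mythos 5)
Your proposal is correct and follows essentially the same route as the paper: your $N(x)$ is exactly the paper's low-order polynomial $h(x)$, your three governing scalars ($N(-1)\geq 0\Leftrightarrow c\geq b/2$, the discriminant $\Leftrightarrow 8ac\geq(a+b)^2$, and the vertex at $c=(3b-a)/4$) are the same thresholds the paper extracts, the limiting argument for necessity via large even $n$ is identical, and you correctly identify that subcase (III)(iii) cannot be closed in $a,b,c$ alone, which is why the explicit hypothesis on $q_n$ appears in the statement. The only cosmetic difference is that you substitute $x=-t$ and argue monotonicity of $Q_n$ directly, where the paper counts sign changes of $q_n(-x)$ via Descartes' rule and then pins down the exact root count with the same monotonicity considerations.
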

\begin{proof}
Let $p_n(x)=\sum_{i=0}^na_ix^i$ for $n \geq 0$. We wish to find conditions on $a,b, c\geq 0$ under which $p_n(x)$ has zero or one real roots depending on whether $n$ is even or odd for all $n$. Note that $a=b=c=0$ is clearly disallowed, whereas if $c=0$ and $a$ and $b$ are not both zero, then $x=0$ would be a real root in the even case. Thus, we must have $c>0$.  Further, the $n=0$ and $n=1$ cases are self-evident and follow from $c>0$, and so we may assume $n \geq 2$ for the remainder of the proof.   A routine calculation, using the fact $\sum_{i=0}^nx^i=\frac{1-x^{n+1}}{1-x}$ and differentiation, implies
$$p_n(x)=\frac{q_n(x)}{(1-x)^3},$$
where
\begin{align*}
q_n(x)=&-(an^2+bn+c)x^{n+3}+(a(2n^2+2n-1)+b(2n+1)+2c)x^{n+2}\\
&-(a(n+1)^2+b(n+1)+c)x^{n+1}+(a-b+c)x^2+(a+b-2c)x+c.
\end{align*}

Since $a,b, c\geq 0$ with $c>0$, the polynomial $p_n(x)$ has no non-negative roots, so we need only check for negative roots.  Further, $p_n(x)$ and $q_n(x)$ have the same negative roots, so we consider the latter.  Thus, we seek conditions under which $q_n(x)$ has zero or one negative roots depending on the parity of $n$ for all $n$.  To do so, we apply Descartes' rule of signs and count the sign changes in $q_n(-x)$. Note that $n \geq 2$ implies the terms in $q_n(x)$ are all distinct.  Since $q_n(-1)>0$ and $q_n'(x)<0$ for $x<-1$ if $n\geq 2$ is even, we may restrict attention to the interval $(-1,0)$ when considering roots of $q_n(x)$ in the even case.  Further, the same may be done in the odd case since $q_n(-1)<0$ with $q_n'(x)>0$ if $x<-1$ and $n\geq 3$ is odd.

First suppose $b>a \geq 0$ and $c<b-a$.  Then the $x^2$ coefficient in $q_n(x)$ is negative and $q_n(-x)$ has one sign change if $n$ is odd.  Then $q_n(x)$, and hence $p_n(x)$, has one negative root, implying $p_n(x)$ has one real root in this case.  However, when $n$ is even, we have that $q_n(-x)$ has two sign changes and hence $q_n(x)$ has either zero or two negative roots.  We now determine a condition on $a,b,c$ which ensures the latter cannot occur. First let
$$h(x)=(a-b+c)x^2+(a+b-2c)x+c.$$
In order to guarantee $q_n(x)>0$ on $(-1,0)$ when $n$ is even, we must have $h(x) \geq 0$ on the interval.
For if not and $h(x_0)<0$ for some $x_0\in(-1,0)$, then $q_n(x_0)<0$ upon choosing $n$ even sufficiently large, since one can make the positive quantity $q_n(x_0)-h(x_0)$ as small as one pleases by so doing.  Conversely, if $q_m(x_0)\leq 0$ for some $m$ even, then $h(x_0)<0$.

To show $h(x) \geq 0$ on $(-1,0)$, we consider the $x$-coordinate, which we will denote by $q$, of the vertex of $h$, which corresponds to a local maximum in this case since $a-b+c<0$.  Note $q=-\frac{a+b-2c}{2(a-b+c)}$ and we consider cases based on $q$.  If $q \geq -1$, then one needs to verify $h(0) \geq 0$ and $h(-1) \geq 0$ to ensure $h(x) \geq 0$ on $(-1,0)$.  Note that $h(-1) \geq 0$ if and only if $b \leq 2c$, with $h(0)=c>0$.  On the other hand, if $q <-1$, then we have $3b-a<4c$ and again $b \leq 2c$ since $b-a>0$.  Conversely, if $b \leq 2c$, then we have $h(x) \geq 0$ for all $-1<x<0$, which completes case (I).

We now proceed with case (II) and assume $b>a \geq 0$ and $c=b-a$.  Then the coefficient of $x^2$ in $q_n(x)$ vanishes and there is one sign change in $q_n(-x)$ if $n$ is odd.  If $n$ is even, then again the problem reduces to showing $h(x)=(a+b-2c)x+c \geq 0$ on $(-1,0)$.  If $a+b-2c=a+b-2(b-a) \leq 0$, i.e., $b \geq 3a$, this clearly holds.  If $a+b-2c>0$, then it suffices to verify $h(-1) \geq 0$, which holds if and only if $b \geq 2a$.  Thus, if $c=b-a>0$, the necessary condition for $p_n(x)$ having the minimum number of real roots for all $n$ is $b \geq 2a$, which is seen to be sufficient as well.

Finally, for case (III), suppose $c>\max\{0,b-a\}$.  Then $c \leq \frac{3b-a}{4}$ in subcase (ii) implies $c<\frac{a+b}{2}$, whence (i), (ii) and (iii) are mutually disjoint.  To see this, note that $c \leq \frac{3b-a}{4}$ iff $\frac{a+b-2c}{2(a-b+c)} \geq 1$ since $a-b+c>0$.  Thus, $\frac{a+b-2c}{2(a-b+c)}>0$, which implies $c \leq \frac{a+b}{2}$.  One may verify that $q_n(-x)$ has zero or one sign changes depending on whether $n$ is even or odd iff $c \geq \frac{a+b}{2}$, which gives (i).  So assume $c < \frac{a+b}{2}$.  Then there are three sign changes in $q_n(-x)$ if $n$ is odd and two if $n$ is even.  We consider further subcases based on $q$; note $c<\frac{a+b}{2}$ implies $q<0$. Further $q \leq -1$ iff $c \leq \frac{3b-a}{4}$.

So first assume $q \leq -1$.  If $n$ is odd, then $q_n'(x)>0$ for $1 \leq x <0$  since $q_n(x)$ is the sum of $q_n(x)-h(x)$ and $h(x)$, which are both increasing on the interval.  Therefore $q_n'(x)>0$ for all $x<0$ in this case, which implies a single negative root since $\lim_{x\rightarrow -\infty}(q_n(x))=-\infty$ and $q_n(0)>0$.  On the other hand, if $n$ is even, then we must require $h(-1) \geq 0$ , i.e., $b \leq 2c$, in order to guarantee that $q_n(x)$ has no roots on $(-1,0)$.  Thus, the required condition when $q \leq -1$ in (III) is $\frac{b}{2}\leq c \leq \frac{3b-a}{4}$, which is also seen to be sufficient and yields (ii).  Finally, if $-1<q<0$, then we must have $\frac{3b-a}{4}<c<\frac{a+b}{2}$ as well as the requirement concerning $q_n(x)$ for $n$ odd given in part (iii) of (III). If $n$ is even, then we require further $h(q) \geq 0$, which occurs iff $8ac \geq (a+b)^2$ as one may verify.  This completes the cases in (III) and the proof.
\end{proof}

\remark{} The sequences $a_n=n+1$ and $a_n=(n+1)^2$ from subsections \ref{5.5} and \ref{5.6} above correspond respectively to the $a=0,b=c=1$ and $a=c=1,b=2$ cases of Theorem \ref{quadraticseq} and are covered under (II).\medskip

Our next result simultaneously accounts for the behavior observed in subsections \ref{binom} and \ref{5.7}.

\begin{theorem}\label{Catalangen}
Suppose $a_n$ is a sequence of positive real numbers satisfying $\frac{a_{n+1}}{a_n}=\frac{an+b}{cn+d}$ for $n \geq 0$, where $a,b,c,d\geq 0$ with $c \leq d$. Assume further that $a,b$ are not both zero and the same for $c,d$. Then $b_n$ is the trivial sequence for all such $a_n$.
\end{theorem}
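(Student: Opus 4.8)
The plan is to work entirely on the negative real axis. Since every $a_i>0$, we have $p_n(x)>0$ for $x\geq 0$, so all real roots are negative; writing $P(t)=p_n(-t)$ for $t>0$, I must therefore count the positive roots of $P$, showing there are none when $n$ is even and exactly one when $n$ is odd. Note first that the hypotheses force $b>0$ and $d>0$ (else $a_1=0$ or a ratio degenerates), and that the fully degenerate geometric case $a=c=0$ is already handled in Section~\ref{expon}; so I may assume $(a,c)\neq(0,0)$, which guarantees $at+c>0$ for $t>0$.

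The engine of the proof is to convert the recursion $(cn+d)a_{n+1}=(an+b)a_n$ into a single first-order identity for the section. Summing the relation $(ai+b)a_i=(ci+d)a_{i+1}$ against $x^i$, using $\sum i a_i x^i=xp_n'(x)$ and reindexing, the lone boundary term collapses because $(cn+d)a_{n+1}=(an+b)a_n$, and after multiplying by $x$ one obtains
\[
x(ax-c)\,p_n'(x)+\bigl(bx-(d-c)\bigr)p_n(x)=(an+b)a_n\,x^{n+1}-(d-c)a_0 .
\]
Substituting $x=-t$ (so $p_n'(x)=-P'(t)$ and $x(ax-c)=t(at+c)$) converts this to
\[
t(at+c)\,P'(t)+\bigl(bt+(d-c)\bigr)P(t)=(d-c)a_0+(-1)^n(an+b)a_n\,t^{n+1}.
\]
At a positive root $t_0$ of $P$ the middle term vanishes, and since $t_0(at_0+c)>0$ the sign of $P'(t_0)$ equals the sign of the right-hand side.

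For $n$ even the right-hand side at a root is $(d-c)a_0+(an+b)a_n t_0^{n+1}>0$, so $P'(t_0)>0$ at every positive root. But $P(0)=a_0>0$ forces $P'\leq 0$ at the smallest positive root, a contradiction; hence $P$ has no positive root and $b_n=0$. For $n$ odd the right-hand side is $(d-c)a_0-(an+b)a_n t_0^{n+1}$, a strictly decreasing function of $t_0$, so with $\tau=\left(\frac{(d-c)a_0}{(an+b)a_n}\right)^{1/(n+1)}$ (and $\tau=0$ when $c=d$) one has $P'<0$ at any root above $\tau$ and $P'>0$ at any root below it. Since $P(0)>0$ while $P(t)\to-\infty$, the smallest root $t^\ast$ is a down-crossing, so $P'(t^\ast)\leq 0$ and thus $t^\ast\geq\tau$; any later return to zero would be an up-crossing at a point exceeding $t^\ast\geq\tau$, where $P'<0$ strictly, which is impossible. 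This leaves exactly one positive root, giving $b_n=1$.

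The main obstacle is the odd case with $d>c$, and precisely the possibility of a double root at the threshold $\tau$, which would wreck the count $b_n=1$. I would rule it out by differentiating the displayed identity: at a hypothetical double root $t_0$ (where $P=P'=0$) the identity forces $t_0=\tau$, and differentiation yields $t_0(at_0+c)P''(t_0)=-(an+b)a_n(n+1)t_0^{\,n}<0$, so $\tau$ is a strict local maximum of height $0$. Then $P<0$ immediately to its left, which contradicts $P>0$ on $(0,t^\ast)$ when $t^\ast=\tau$, and contradicts $t_0=\tau$ when $t^\ast>\tau$. Hence every positive root is simple and the sign analysis is rigorous; the reductions $b,d>0$ and the degenerate geometric case are routine.
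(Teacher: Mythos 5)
Your proof is correct, and although it starts exactly where the paper's does --- summing the recurrence $(ci+d)a_{i+1}=(ai+b)a_ix$ against powers of $x$ to produce the first-order identity $x(ax-c)p_n'(x)+\bigl(bx-(d-c)\bigr)p_n(x)=(an+b)a_nx^{n+1}-(d-c)a_0$, which I have checked --- it finishes by a genuinely different route. The paper normalizes the ratio, treats the identity as a linear ODE, and multiplies by an integrating factor such as $|1-ax|^{(b-a(c-1))/a}|x|^{c-1}$ (with separate factors and limits for the cases $a=0$, $c=1$, and $c=0$), then counts roots from the global monotonicity or unimodality of the resulting auxiliary function $f_n$ on $(-\infty,0)$. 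You never leave the polynomial setting: you evaluate the identity only at a hypothetical root $t_0$ of $P(t)=p_n(-t)$, where the $P$-term drops out and the sign of $P'(t_0)$ is read off from the sign of $(d-c)a_0+(-1)^n(an+b)a_nt_0^{n+1}$, and then run an elementary first-root/crossing argument; the one delicate configuration, a double root at the threshold $\tau$, you correctly eliminate by differentiating the identity once more to obtain $t_0(at_0+c)P''(t_0)=-(n+1)(an+b)a_nt_0^{n}<0$. What the integrating factor buys the paper is a single monotone function whose root count is immediate; what your version buys is the complete avoidance of the non-integer-power case splitting, at the small cost of the explicit double-root exclusion. Your preliminary reductions ($b,d>0$ from positivity of the sequence, all real roots negative, and deferral of the degenerate case $a=c=0$ to the exponential example of Section 5.3) are all sound, so the argument is complete.
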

\begin{proof}
Let $p_n(x)=\sum_{i=0}^na_ix^i$ for $n \geq 0$.  First suppose $c>0$.  Upon dividing by $c$ and renaming variables, one may assume a ratio of the form $\frac{a_{n+1}}{a_n}=\frac{an+b}{n+c}$, where $a,b \geq 0$ are not both zero and $c \geq 1$. Multiplying both sides of $(i+c)a_{i+1}=(ai+b)a_i$ by $x^i$, and summing over $0 \leq i \leq n$, gives
$$\sum_{i=0}^n(i+c)a_{i+1}x^i=\sum_{i=0}^n(ai+b)a_ix^i.$$
The left side of the last equality may be written as
\begin{align*}
\sum_{i=1}^{n+1}(i+c-1)a_ix^{i-1}&=(n+c)a_{n+1}x^n+\sum_{i=1}^n(i+c-1)a_ix^{i-1}\\
&=(n+c)a_{n+1}x^n+\sum_{i=1}^nia_ix^{i-1}+(c-1)\sum_{i=1}^na_ix^{i-1}\\
&=(n+c)a_{n+1}x^n+p_n'(x)+\frac{c-1}{x}(p_n(x)-a_0),
\end{align*}
and the right as
$$\sum_{i=0}^n(ai+b)a_ix^i=ax\sum_{i=1}^nia_ix^{i-1}+b\sum_{i=0}^na_ix^i=axp_n'(x)+bp_n(x).$$
Therefore, we get
$$(n+c)a_{n+1}x^n+p_n'(x)+\frac{c-1}{x}(p_n(x)-a_0)=axp_n'(x)+bp_n(x),$$
which may be rewritten as
\begin{equation}\label{dife1}
p_n'(x)+\frac{c-1-bx}{x(1-ax)}p_n(x)=\frac{(c-1)a_0-(n+c)a_{n+1}x^{n+1}}{x(1-ax)}.
\end{equation}

First assume $a>0$.  Note that by partial fractions, we have
\begin{align*}
\int\frac{c-1-bx}{x(1-ax)}dx&=\int\left(\frac{a(c-1)-b}{1-ax}+\frac{c-1}{x}\right)dx\\
&=\frac{b-a(c-1)}{a}\ln|1-ax|+(c-1)\ln|x|.
\end{align*}
Proceeding as one would in solving a first-order linear differential equation, we multiply both sides of \eqref{dife1} by the integrating factor
$$\exp\left(\int\frac{c-1-bx}{x(1-ax)}dx\right)=|1-ax|^{\frac{b-a(c-1)}{a}}|x|^{c-1},$$
which implies
\begin{align}
&\frac{d}{dx}\left((1-ax)^{\frac{b-a(c-1)}{a}}|x|^{c-1}p_n(x)\right)\notag\\
&\qquad\qquad\quad=\left(\frac{(c-1)a_0-(n+c)a_{n+1}x^{n+1}}{x}\right)(1-ax)^{\frac{b-ac}{a}}|x|^{c-1}, \quad x<0.\label{dife2}
\end{align}

Again, we need only consider possible negative roots of $p_n(x)$ since $a_n>0$ for all $n \geq 0$ implies $p_n(x)>0$ for all $x \geq 0$.  Let
$$f_n(x)=(1-ax)^{\frac{b-a(c-1)}{a}}|x|^{c-1}p_n(x), \qquad n \geq 0.$$
Note that $p_n(x)$ and $f_n(x)$ have the same negative roots, and we find it more convenient to deal with the roots of the latter. We consider cases on $c$.  First suppose $c>1$.  Then by \eqref{dife2}, we have $f_n'(x)<0$ for $x<0$ if $n$ is even, since $(c-1)a_0-(n+c)a_{n+1}x^{n+1}>0$ for all such $x$ and $n$.  Since $c>1$ implies $f_n(0)=0$, it follows that $f_n$, and hence $p_n$, has no negative roots.  On the other hand, if $n$ is odd, then $c>1$ implies $f_n'(x)$ for $x<0$ has exactly one sign change, from positive to negative.  Since $f_n(0)=0$ and $\lim_{x\rightarrow-\infty}(f_n(x))=-\infty$ for $n$ odd, it follows that $f_n(x)$, and hence $p_n(x)$, has a single negative root, which completes the case when $c>1$.  On the other hand, if $c=1$, then \eqref{dife2} gives $f_n'(x)<0$ for $x<0$ if $n$ is even and $f_n'(x)>0$ for $x<0$ if $n$ is odd.  Since $c=1$ implies $f_n(0)=p_n(0)=a_0>0$, the same conclusion as before is reached concerning the negative roots of $f_n$, and hence $p_n$.

A similar argument applies if $a=0$, where instead of \eqref{dife2}, one gets
\begin{align*}
&\frac{d}{dx}\left(e^{-bx}|x|^{c-1}p_n(x)\right)\notag\\
&\qquad\qquad\quad=\left(\frac{(c-1)a_0-(n+c))a_{n+1}x^{n+1}}{x}\right)e^{-bx}|x|^{c-1}, \quad x<0.
\end{align*}
Considering the cases $c>1$ and $c=1$ as before leads to the same conclusion.

Now suppose $c=0$ in the original problem above where $\frac{a_{n+1}}{a_n}=\frac{an+b}{cn+d}$.  Since $d>0$, we then may write $\frac{a_{n+1}}{a_n}=pn+q$, where $p,q\geq 0$ are not both zero.  If $p=0$, then the result is clear since one gets in this case an exponential sequence for $a_n$ as in subsection \ref{expon} above.  So assume $p>0$.  If we once again denote $\sum_{i=0}^na_ix^i$ by $p_n(x)$, then we get the differential equation
$$p'_n(x)+\frac{qx-1}{px^2}p_n(x)=\frac{a_{n+1}x^{n+1}-a_0}{px^2},$$
which may be rewritten as
$$g'_n(x)=|x|^{q/p-2}e^{1/(px)}\left(\frac{a_{n+1}x^{n+1}-a_0}{p}\right), \qquad x<0,$$
where
$$g_n(x)=|x|^{q/p}e^{1/(px)}p_n(x).$$
Then we may proceed as in the argument above to reach the same conclusion, upon noting
$$g_n(0)=\lim_{x\rightarrow 0^-}(g_n(x))=p_n(0)\cdot\lim_{x\rightarrow 0^-}\left(|x|^{q/p}e^{1/(px)}\right)=a_0\cdot 0=0,$$
which completes the proof.
\end{proof}

\remark{} The sequences $a_n=\binom{n+k}{k}$ and $a_n=\frac{1}{n+1}\binom{2n}{n}$ from subsections \ref{binom} and \ref{5.7} correspond to the cases of Theorem \ref{Catalangen} when $a=c=d=1,b=k+1$ and $a=4,b=d=2,c=1$, respectively, which accounts for the triviality of $b_n$ witnessed above by these sequences.  Further, we note that the central binomial sequences $a_n=\binom{2n}{n}$ and $a_n=\binom{2n+1}{n}$ for $n \geq 0$ correspond to the $a=4,b=2,c=d=1$ and $a=4,b=6,c=1,d=2$ cases of Theorem \ref{Catalangen}, and hence they too have trivial $b_n$. Taking $a=b=d=1,c=0$ gives the same result for $a_n=n!$.  If $a=2,b=d=1,c=0$ or $a=b=2,c=0,d=1$, then one obtains the result for the double factorial sequences $a_n=(2n-1)!!$ and $a_n=(2n)!!$, respectively.\medskip

The following result provides a way of constructing sequences $a_n$ for which $b_n$ is minimal.

\begin{theorem}
Suppose $p(x)=\sum_{i=0}^na_ix^i \in \mathbb{R}[x]$ is of degree $n$ with $a_0 >0$ and that $p(x)$ has zero or one real roots depending on if $n$ is even or odd.  Then there exists $u>0$ such that $q(x)=p(x)+ux^{n+1}$ has one or zero real roots depending on the parity of $n$.
\end{theorem}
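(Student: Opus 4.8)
The plan is to split into two cases according to the parity of $n$, mirroring the construction in Theorem~\ref{distinctroots} but now arranging for the \emph{minimal} rather than the maximal number of real roots. In both cases the hypothesis $a_0>0$ lets me pin down the sign behavior of $p$: since $p$ has the minimal number of real roots for its degree and $p(0)=a_0>0$, when $n$ is even we get $p(x)>0$ for all real $x$ (so its leading coefficient $a_n$ is positive), while when $n$ is odd $p$ has a single simple root $r\neq 0$ at which it changes sign.

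First, suppose $n$ is even, so that $q(x)=p(x)+ux^{n+1}$ has odd degree $n+1$ and I want exactly one real root. For any $u>0$ one has $q(x)>0$ when $x\ge 0$ (since $p(x)>0$ and $ux^{n+1}\ge 0$ there, and $q(0)=a_0>0$), so every root of $q$ is negative. On $(-\infty,0)$ the equation $q(x)=0$ is equivalent to $\phi(x)=u$, where $\phi(x)=p(x)/(-x^{n+1})>0$; here $\phi$ is continuous with $\lim_{x\to0^-}\phi(x)=+\infty$ and $\lim_{x\to-\infty}\phi(x)=0^+$. The critical points of $\phi$ are the finitely many roots of the degree-$n$ polynomial $(n+1)p(x)-xp'(x)$, so $\phi$ has only finitely many critical values. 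I would then choose $u$ strictly below every critical value of $\phi$ on $(-\infty,0)$. With this choice $\phi=u$ has at least one solution by the intermediate value theorem, and it cannot have three or more: a second descent of $\phi$ back down to the level $u$ would force a local minimum of $\phi$ with value strictly less than $u$, contradicting the choice of $u$. Hence $q$ has exactly one negative real root, which is simple since $u$ avoids the critical values.

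Next, suppose $n$ is odd, so $q$ has even degree $n+1$ with positive leading coefficient $u$, and I want zero real roots, i.e.\ $q(x)>0$ for all $x$. Since $n+1$ is even, $x^{n+1}\ge0$, so wherever $p(x)\ge0$ we already have $q(x)\ge0$, with equality only possible at $x=0$, which is excluded because $q(0)=a_0>0$. Thus the only danger region is the single unbounded interval $I$ on which $p<0$ (namely $(-\infty,r)$ or $(r,\infty)$ according to the sign of $a_n$), and $0\notin I$. On $I$ the inequality $q>0$ is equivalent to $u>\psi(x)$, where $\psi(x)=-p(x)/x^{n+1}>0$. The plan is to show $\psi$ is bounded on $I$: it tends to $0$ as $x\to r$ (because $p(r)=0$ and $r^{n+1}\neq0$) and as $|x|\to\infty$ (because $\psi(x)\sim -a_n/x$), so $\psi$ extends continuously with limit $0$ at both ends of $I$ and therefore attains a finite supremum $U$. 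Choosing any $u>U$ then gives $q>0$ on $I$, hence on all of $\mathbb{R}$, so $q$ has no real roots.

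The main obstacle is the counting step in the even case: producing a real root is automatic from the odd degree, but ruling out extra real roots requires the critical-value argument above to prevent $\phi$ from recrossing the level $u$. It is worth noting that the two cases pull $u$ in opposite directions — small in the even case (below all critical values of $\phi$) and large in the odd case (above $\sup_I\psi$) — which is why a single uniform choice of $u$ is unavailable and the parity split is essential.
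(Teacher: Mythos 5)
Your proof is correct, and it is worth separating the two parities when comparing it with the paper's argument. In the odd-$n$ case you follow essentially the same route as the paper: both arguments reduce to bounding $-p(x)/x^{n+1}$ on the only region where $q$ could vanish and then taking $u$ above that bound (the paper bounds it on $(-\infty,-\delta]$ after isolating a neighborhood of $0$ on which $p>a_0/2$, while you bound it on the interval $\{p<0\}$, using that it tends to $0$ at the finite endpoint $r$ and at infinity). The even-$n$ case is where you genuinely diverge. The paper places $u$ inside a window $(|j(-\alpha)|,\,m/\alpha^{n+1})$ chosen so that $q'>0$ on $(-\infty,-\alpha]$ and $q>0$ on $[-\alpha,0)$; verifying that this window is nonempty requires an asymptotic comparison as $\alpha\to 0^{+}$ together with a separate treatment of the case $a_1=0$. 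You instead take $u>0$ strictly below the finitely many critical values of $\phi(x)=-p(x)/x^{n+1}$ on $(-\infty,0)$ --- these are the values of $\phi$ at the roots of the degree-$n$ polynomial $(n+1)p(x)-xp'(x)$ lying in $(-\infty,0)$, all positive since $p>0$ there, so such a $u$ exists --- which makes $u$ a regular value: since $\phi$ runs from $0^{+}$ at $-\infty$ to $+\infty$ at $0^{-}$, it meets the level $u$ transversally an odd number of times, and three or more crossings would force a local minimum of $\phi$ strictly below $u$, i.e., a critical value below $u$. This is arguably cleaner than the paper's construction: it sidesteps the $a_1=0$ case split and the dominant-term estimate, and it yields simplicity of the unique negative root for free (which you correctly note is needed, as roots are counted with multiplicity). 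The trade-off is that the paper's explicit window gives a concrete admissible range for $u$, whereas your $u$ is defined implicitly through the critical values of $\phi$.
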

\begin{proof}
First assume $n$ is odd.  Note that the condition $a_0>0$ is necessary for the existence of such a $u>0$, for if not, and $a_0 \leq 0$, then $q(x)$ would have at least one real root for all $u>0$.  By continuity, one may pick $\delta>0$ such that $p(x)>\frac{a_0}{2}$ for all $x \in (-\delta,\delta)$.  Note that the real root of $p(x)$ has sign opposite that of the leading coefficient $a_n$.  First suppose $a_n>0$.  Let $h(x)=-\sum_{i=0}^na_ix^{i-n-1}$ for $(-\infty,-\delta]$.  Note that $h(x)$ is bounded on $(-\infty,-\delta]$, being the sum of $n+1$ bounded functions on the interval. Let $M>0$ such that $h(x)<M$ on $(-\infty,-\delta]$.   If $u\geq M$, then we have $ux^{n+1}>-\sum_{i=0}^na_ix^i$ for all $-\infty<x\leq -\delta$, i.e., $q(x)>0$ on the interval for such $u$.  If $-\delta<x<\infty$, then already $p(x)>0$ on this interval since $p$ only has one real root, which is negative as $a_n>0$ by assumption.  Hence, $q(x)\geq p(x)>0$ for $-\delta<x<\infty$, implying $q(x)>0$ for all $x$ as desired.  A similar argument applies if $a_n<0$, upon picking $u>\max_{x\in[\delta,\infty)}(h(x))$.

Now suppose $n$ is even, where clearly we may assume $n \geq 2$.  Then $a_0>0$ implies we must find $u>0$ such that $q$ has exactly one negative root. Such a $u$ is the one sought in the theorem statement since $q(x)\geq p(x)>0$ for $x \geq 0$.  To find $u$, again let  $\delta>0$ such that $p(x)>\frac{a_0}{2}$ for all $x \in (-\delta,\delta)$.  Let $m=\min_{x\in[-\delta,0]}(p(x))$; note that $\frac{a_0}{2}\leq m \leq a_0$.  We require $u<\frac{m}{\delta^{n+1}}$, for then $ux^{2n+1}+m>0$ for $-\delta \leq x \leq 0$ would imply $q(x)=ux^{n+1}+p(x)>0$ on the interval for such $u$.  Let us require further $q'(x)>0$ for all $x \in (-\infty,-\delta)$.  Note $q'(x)>0$ iff $u(n+1)x^n+\sum_{i=1}^nia_ix^{i-1}>0$ iff $u>j(x)$, where $j(x)=-\frac{1}{n+1}\sum_{i=1}^nia_ix^{i-n-1}$.  Note that $j(x)$ is a bounded function on $(-\infty,-\alpha]$ for any $\alpha>0$.  If $\alpha>0$ is chosen small enough, then $|j(x)|$ achieves its maximum value on $(-\infty,-\alpha]$ at $x=-\alpha$; to see this, note that the dominant term in $j(-\alpha)$ for small $\alpha$ occurs when $i=1$ in the sum, assuming $a_1 \neq 0$.  Since the $i=1$ term is of the form $\frac{k}{\alpha^{n}}$ for a nonzero constant $k$, it follows that $|j(-\alpha)|<\frac{m}{\alpha^{n+1}}$ (*) for $\alpha$ sufficiently small.  Thus, we let $\alpha$, where $0<\alpha<\delta$, be small enough such that both $|j(x)|$ achieves its maximum value on $(-\infty,-\alpha]$ at $x=-\alpha$ and the inequality (*) holds.

Now pick $u>0$ so that $|j(-\alpha)|<u<\frac{m}{\alpha^{n+1}}$, where $\alpha$ is as specified above.  For such $u$, one has $u>\max_{x\in(-\infty,-\alpha]}(j(x))$, i.e., $(n+1)ux^n+\sum_{i=1}^nia_ix^{i-1}>0$, or $q'(x)>0$ for all $-\infty<x\leq -\alpha$.  Also, $u<\frac{m}{\alpha^{n+1}}$ and $n$ even implies $q(x)=ux^{n+1}+p(x)\geq u(-\alpha)^{n+1}+p(x) \geq u(-\alpha)^{n+1}+m>0$ for all $x \in [-\alpha,0)$.  Since $q'(x)>0$ for $-\infty<x<-\alpha$, $q(x)>0$ for $-\alpha \leq x<0$ and $\lim_{x \rightarrow -\infty}(q(x))=-\infty$, it follows that $q(x)$ has exactly one negative root as desired, which completes the case when $a_1 \neq 0$.  On the other hand, if $a_1=0$, consider the smallest index $1<i \leq n$ such that $a_i \neq 0$. Again, we have $|j(x)|$ achieving its maximum value on $(-\infty,-\alpha]$ at $x=-\alpha$ if $\alpha>0$ is sufficiently small.  If $\alpha<\delta$ is also small enough so as to ensure (*) holds, then once again pick $u$ strictly between the quantities compared in (*) and complete the proof as before. Thus, one can always find  $u>0$ such that $q(x)$ has a single negative root, which completes the even case and the proof.
\end{proof}

We conclude with the following result for the minimality of $b_n$ when $a_n=n^n$ since the method of comparing the various terms in its proof can be applied to other sequences with a sufficiently fast growth rate.

\begin{theorem}\label{n^nthm}
The polynomial $p_m(x)=1+\sum_{i=1}^m(ix)^i$ for $m \geq 0$ has zero or one real roots depending on if $m$ is even or odd.
\end{theorem}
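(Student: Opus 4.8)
The plan is to work throughout with negative arguments only. Since every coefficient of $p_m$ is nonnegative and the constant term is $1>0$, we have $p_m(x)>0$ for all $x\ge 0$, so the real roots of $p_m$ are exactly its negative roots. Substituting $x=-t$, it suffices to count the positive roots of
\[
g(t)=p_m(-t)=\sum_{i=0}^m(-1)^i a_i t^i,\qquad a_i=i^i\ (i\ge1),\ a_0=1,
\]
and to show this number is $0$ for $m$ even and $1$ for $m$ odd.

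The structural fact I would isolate first is that, for each fixed $t>0$, the terms $a_it^i$ are \emph{unimodal in a U-shape}: they are non-increasing and then non-decreasing in $i$. This follows because $a_i=i^i$ is log-convex, so the ratios $\rho_i=a_{i+1}/a_i=(i+1)(1+1/i)^i$ are strictly increasing in $i$, whence the ratios $a_{i+1}t^{i+1}/a_it^i=\rho_i t$ cross the value $1$ at most once. Thus there is an index $k=k(t)$ with $a_0\ge a_1t\ge\cdots\ge a_kt^k\le a_{k+1}t^{k+1}\le\cdots\le a_mt^m$. Splitting $g=L+R$ with $L=\sum_{i=0}^{k}(-1)^ia_it^i$ and $R=\sum_{i=k+1}^{m}(-1)^ia_it^i$ then expresses $g$ as the sum of two alternating series, each with monotone term magnitudes: $L$ has non-increasing magnitudes and dominant (largest) term $a_0=+1$, while $R$, read from the top index downward, has non-increasing magnitudes and dominant term $(-1)^m a_mt^m$.

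For the even case I would exploit that both dominant terms are then positive. An alternating sum whose magnitudes are monotone and whose dominant term is positive is itself nonnegative (group consecutive terms in pairs), so $L\ge0$ and $R\ge0$; a short boundary check using the strict inequality $a_mt^m>a_{m-1}t^{m-1}$ (and the strictly positive term $a_0=1$) upgrades this to $g(t)>0$ for all $t>0$. Hence $p_m$ has no negative, and so no real, root, giving $b_m=0$.

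For the odd case the dominant top term $-a_mt^m$ is negative, and since $g(0^+)=1>0$ while $g(t)\to-\infty$, the Intermediate Value Theorem supplies at least one positive root. The real work is \emph{uniqueness}, and this is the step I expect to be the main obstacle. I would reduce it to the claim that every positive root is a strict down-crossing, i.e. $g(t_0)=0\Rightarrow g'(t_0)<0$; once this is known, two roots would force an intervening up-crossing, a contradiction, so there is exactly one. Writing $g=E-O$ with $E(t)=\sum_{i\ \mathrm{even}}a_it^i$ and $O(t)=\sum_{i\ \mathrm{odd}}a_it^i$, at a root we have $E(t_0)=O(t_0)$, so $g'(t_0)<0$ is equivalent to $(\log O)'(t_0)>(\log E)'(t_0)$, i.e. to the conditional-mean inequality
\[
\frac{\sum_{i\ \mathrm{odd}} i\,a_it_0^i}{\sum_{i\ \mathrm{odd}} a_it_0^i}>\frac{\sum_{i\ \mathrm{even}} i\,a_it_0^i}{\sum_{i\ \mathrm{even}} a_it_0^i}.
\]
Establishing this inequality is the crux and is exactly where the rapid growth of $a_i=i^i$ must be used quantitatively: the U-shape above makes the weights $a_it^i$ heaviest at the extreme indices, and near the root the top term dominates the constant term, so the weighted mean over odd indices is pulled toward the largest odd index $m$ while the mean over even indices is pulled toward the largest even index $m-1$, yielding the strict inequality since $m>m-1$. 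Turning this heuristic into rigorous term-by-term estimates (equivalently, showing the relevant convolution coefficients of $O'E-OE'$ are nonnegative) is the delicate part of the argument, and is the portion whose method, as noted, transfers to any sequence of sufficiently fast growth.
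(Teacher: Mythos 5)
Your even case is correct and is organized somewhat differently from the paper's. You use log-convexity of $i^i$ to get that the magnitudes $a_it^i$ are U-shaped in $i$, split the alternating sum at the valley, and observe that each half is an alternating sum with monotone magnitudes whose dominant term ($a_0=1$ on the left, $a_mt^m$ on the right) is positive when $m$ is even, hence each half is nonnegative; a routine check (slightly more fiddly than you indicate, e.g.\ at $t=1$ where $a_0=a_1t$) upgrades this to strict positivity. The paper instead pairs each odd-index term with the next even-index term, notes that the pairwise comparison fails only for $x$ below a decreasing threshold $u_i$, and bounds the sum of the finitely many ``bad'' negative terms by the constant term via a separate lemma; your valley-splitting accomplishes the same bookkeeping and is arguably cleaner.

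The odd case, however, contains a genuine gap, and it is the half that carries the real content. You reduce uniqueness of the positive root of $g(t)=p_m(-t)$ to the claim that $g'(t_0)<0$ at every root, equivalently to the conditional-mean inequality comparing the $O$- and $E$-weighted averages of the index, and then you explicitly defer its proof: the statement that ``the odd mean is pulled toward $m$ and the even mean toward $m-1$'' is a heuristic, not an argument, and it is exactly the quantitative estimate the theorem demands. Moreover, working at an unknown root $t_0$ makes the estimate harder, not easier. The paper sidesteps this entirely by proving the stronger global statement that $p_m'(x)>0$ for all $x$ when $m$ is odd, so $p_m$ is strictly increasing and has exactly one real root; this converts the odd case into another even-degree positivity problem of the same type as the first half. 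Your own machinery closes the gap the same way: $p_m'(x)=\sum_{j=0}^{m-1}(j+1)^{j+2}x^j$ has even degree $m-1$, its coefficient sequence $(j+1)^{j+2}$ is again log-convex (the ratios $(j+2)\left(\frac{j+2}{j+1}\right)^{j+2}$ are strictly increasing), and both its constant term and its top term are positive at negative arguments, so the identical U-shape/pairing argument gives $p_m'(x)>0$ for $x<0$, while positivity for $x\geq 0$ is immediate. I would recommend replacing your down-crossing reduction with this derivative argument rather than attempting the conditional-mean inequality directly.
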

\begin{proof}
First suppose $m=2n$ where $n \geq 1$ and we show that $p_m(x)>0$ for all $x$.  Clearly, one may restrict attention to the $x<0$ case.  Upon replacing $x$ by $-x$, we must show
\begin{equation}\label{n^nthme1}
\sum_{i=1}^n((2i-1)x)^{2i-1}<1+\sum_{i=1}^n (2ix)^{2i}, \qquad x>0.
\end{equation}
Let $u_n=\frac{1}{2n}\left(1-\frac{1}{2n}\right)^{2n-1}$; note that $u_n$ is a decreasing function of $n$ as one may verify.  Comparing the $i$-th summands where $1 \leq i \leq n$ on both sides of \eqref{n^nthme1}, note that $(2ix)^{2i}>((2i-1)x)^{2i-1}$ iff $x>u_i$.  Thus \eqref{n^nthme1} clearly holds, by addition, if $x>u_1$.  Otherwise, if $x \leq u_1$, then let $\ell$ be the uniquely determined index in $[n]$ such that $u_{\ell+1}<x \leq u_\ell$, where $u_{n+1}=0$.  Then, by the first part of Lemma \ref{lem1} below, we have
$$\sum_{i=1}^\ell((2i-1)x)^{2i-1}\leq \sum_{i=1}^\ell((2i-1)u_\ell)^{2i-1}<1,$$
with
$$((2i-1)x)^{2i-1}<(2ix)^{2i}, \qquad \ell+1 \leq i \leq n,$$
where the latter inequalities hold since $x>u_{\ell+1}>\cdots>u_n$.  Then \eqref{n^nthme1} follows from addition of the preceding set of inequalities, which completes the even case.

Now suppose $m=2n+1$ where $n \geq 1$ and it suffices to show $p_m'(x)>0$ for all $x$, where clearly we may restrict attention to the case $x<0$.  Replacing $x$ by $-x$, we must prove
\begin{equation}\label{n^nthme2}
\sum_{i=1}^n(2i)^{2i+1}x^{2i-1}<1+\sum_{i=1}^n(2i+1)^{2i+2}x^{2i}, \qquad x>0.
\end{equation}
Let $v_n=\frac{1}{2n+1}\left(1-\frac{1}{2n+1}\right)^{2n+1}$.  Note that $v_n$ is decreasing and that $(2i+1)^{2i+2}x^{2i}>(2i)^{2i+1}x^{2i-1}$ iff $x>v_i$.  Thus \eqref{n^nthme2} can be established in the same way as was \eqref{n^nthme1} above using the second part of Lemma \ref{lem1}, upon considering the cases $x>v_1$ or $v_{\ell+1}<x\leq v_\ell$ for some $\ell \in [n]$.  This completes the odd case and the proof.
\end{proof}

\begin{lemma}\label{lem1}
Let $u_n=\frac{1}{2n}\left(1-\frac{1}{2n}\right)^{2n-1}$ and $v_n=\frac{1}{2n+1}\left(1-\frac{1}{2n+1}\right)^{2n+1}$. Then for $n \geq 1$, we have $\sum_{i=1}^n(2i-1)^{2i-1}u_n^{2i-1}<1$ and $\sum_{i=1}^n(2i)^{2i+1}v_n^{2i-1}<1$.
\end{lemma}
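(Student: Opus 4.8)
The plan is to prove each inequality by bounding, uniformly in $i$, the ``base'' occurring in the $i$-th summand and then summing a (nearly) geometric series. The key preliminary observation is that both $(2i-1)u_n$ and $(2i)v_n$ are increasing in $i$ (the quantities $u_n,v_n$ do not depend on $i$), so each attains its maximum at $i=n$, where a clean closed form appears. Writing $u_n=\frac{(2n-1)^{2n-1}}{(2n)^{2n}}$ gives $(2i-1)u_n\le(2n-1)u_n=\left(1-\frac{1}{2n}\right)^{2n}<e^{-1}$, and writing $v_n=\frac{(2n)^{2n+1}}{(2n+1)^{2n+2}}$ gives $(2i)v_n\le(2n)v_n=\left(1-\frac{1}{2n+1}\right)^{2n+2}<e^{-1}$, using in each case the standard fact $(1-\frac1m)^m<e^{-1}$.

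For the first inequality this already suffices, because the exponents match: the $i$-th summand is exactly $((2i-1)u_n)^{2i-1}$, and since the base is $<e^{-1}$ each term is at most $e^{-(2i-1)}$, whence
\[\sum_{i=1}^n((2i-1)u_n)^{2i-1}<\sum_{i=1}^\infty e^{-(2i-1)}=\frac{e}{e^{2}-1}<1.\]

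For the second inequality the exponents no longer match, since $(2i)^{2i+1}v_n^{2i-1}=4i^2\big((2i)v_n\big)^{2i-1}$ carries the polynomial prefactor $4i^2$. I expect this prefactor to be the main obstacle: the naive estimate $4i^2\big((2i)v_n\big)^{2i-1}<4i^2e^{-(2i-1)}$ yields $\sum_{i\ge1}4i^2e^{-(2i-1)}\approx2.58$, which exceeds $1$ and is therefore useless. To recover a usable bound I would retain the additional decay hidden in the identity $(2i)v_n=\frac{i}{n}\,(2n)v_n$, namely the factor $\left(\frac{i}{n}\right)^{2i-1}\le1$, writing $4i^2\big((2i)v_n\big)^{2i-1}<4i^2\left(\frac{i}{n}\right)^{2i-1}e^{-(2i-1)}$. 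This factor is essential precisely because it is small for $i$ far below $n$, exactly where the crude bound is most wasteful.

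The remaining estimate I would then split into a small-$n$ and a large-$n$ regime. For $n\le3$ the sum has at most three explicit terms and is checked directly (it equals $\tfrac{64}{81}$ when $n=1$ and stays below $1$ for $n=2,3$). For $n\ge4$ I would isolate the dominant $i=1$ term, which equals $\frac{4}{en}$, and bound the tail using $\left(\frac{i}{n}\right)^{2i-1}\le\left(\frac{i}{n}\right)^{3}$ for $i\ge2$, giving
\[\sum_{i=2}^n4i^2\left(\frac{i}{en}\right)^{2i-1}\le\frac{4}{n^{3}}\sum_{i=2}^\infty i^{5}e^{-(2i-1)}=\frac{4K}{n^{3}},\]
where $K=\sum_{i\ge2}i^5e^{-(2i-1)}$ is an explicit finite constant (numerically $K\approx4.7$). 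Combining the two pieces produces a bound of the form $\frac{c_1}{n}+\frac{c_2}{n^{3}}$ that is decreasing and below $1$ for all $n\ge4$, completing the proof. The only delicate calibration is to take the exponent in $\left(\frac{i}{n}\right)^{2i-1}\le\left(\frac{i}{n}\right)^{3}$ large enough to force genuine $1/n^3$-decay of the tail, while keeping the threshold ($n\ge4$) small enough that the finitely many remaining cases can be settled by direct computation.
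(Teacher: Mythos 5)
Your argument is correct, but it reaches both inequalities by a genuinely different route than the paper. For the first sum, the paper rewrites the $i$-th term as $f(i)=\bigl(\tfrac{2i-1}{2n-1}\bigr)^{2i-1}e^{-(2i-1)}$ (using the same fact $(1-\tfrac{1}{m})^m<e^{-1}$ that you use), then shows via a calculus argument on $g(x)=(x/(2n-1))^xe^{-x}$ that the terms are decreasing in $i$, so the sum is at most $nf(1)=\tfrac{n}{(2n-1)e}\leq\tfrac1e$; you instead bound the base $(2i-1)u_n$ uniformly by $(2n-1)u_n<e^{-1}$ and sum the geometric series $\sum e^{-(2i-1)}=\tfrac{e}{e^2-1}<1$, which is cleaner and avoids the maximization. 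For the second sum the divergence is larger: the paper checks $n=1,2$ directly and for $n\geq3$ proves the termwise bound $h(i)<2$ for $2\leq i\leq n$ (so that $\sum h(i)<\tfrac{8}{e}+2(n-1)<2n+1$), which requires showing that $j(x)=x^3(x/(2n+1))^{x-2}e^{1-x}$ is decreasing then increasing on $[4,2n]$ and checking both endpoints; you instead retain the factor $(i/n)^{2i-1}$ to isolate a dominant $i=1$ term of size $\tfrac{4}{en}$ and an $O(1/n^3)$ tail, trading the paper's unimodality analysis for a crude but sufficient polynomial-times-exponential tail constant $K$. Your version handles the awkward prefactor $4i^2$ more transparently, at the cost of checking $n\leq3$ by hand rather than $n\leq2$. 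The only loose end is that $K\approx4.7$ should be replaced by a rigorous bound such as $K<5$ (a few explicit terms plus a geometric tail estimate suffice), and the $n=2,3$ evaluations should be recorded as exact rational/decimal verifications; these are on the same footing as the paper's own ``one may verify'' steps and do not affect the soundness of the approach.
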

\begin{proof}
To show $\sum_{i=1}^n(2i-1)^{2i-1}u_n^{2i-1}<1$, i.e.,
$$\sum_{i=1}^n\left(\frac{2i-1}{2n}\right)^{2i-1}\left(1-\frac{1}{2n}\right)^{(2n-1)(2i-1)}<1,$$
it suffices to prove
\begin{equation}\label{leme0}
\sum_{i=1}^nf(i)<1, \qquad n \geq 1,
\end{equation}
where $f(i)=\left(\frac{2i-1}{2n-1}\right)^{2i-1}e^{-(2i-1)}$ for $1 \leq i \leq n$, upon noting $\{\left(1-\frac{1}{n}\right)^n\}_{n\geq1}$ is increasing with limit $\frac{1}{e}$. Thus, we seek to maximize the function
$$g(x)=\left(\frac{x}{2n-1}\right)^xe^{-x}, \qquad 1 \leq x \leq 2n-1,$$
where $n\geq 1$ is fixed.  Upon considering the log, we have that $g(x)$ is decreasing on the interval $(1,2n-1)$ and hence $\sum_{i=1}^nf(i)\leq nf(1)<1$, which implies \eqref{leme0}.

The $n=1,2$ cases of the second inequality may be verified directly, so assume $n \geq 3$. To show $\sum_{i=1}^n(2i)^{2i+1}v_n^{2i-1}<1$,
i.e.,
$$\frac{1}{2n+1}\sum_{i=1}^n(2i)^3\left(\frac{2i}{2n+1}\right)^{2i-2}\left(1-\frac{1}{2n+1}\right)^{(2n+1)(2i-1)}<1,$$
it suffices to prove
\begin{equation}\label{leme1}
\frac{1}{2n+1}\sum_{i=1}^nh(i)<1, \qquad n \geq 3,
\end{equation}
where $h(i)=(2i)^3\left(\frac{2i}{2n+1}\right)^{2i-2}e^{-(2i-1)}$.
For \eqref{leme1}, it suffices to show
\begin{equation}\label{leme2}
h(i) < 2, \qquad 2 \leq i \leq n,
\end{equation}
for then we would have
$$\sum_{i=1}^nh(i)=\frac{8}{e}+\sum_{i=2}^nh(i)<3+2(n-1)=2n+1,$$
as desired.

We then seek to maximize the function
$$j(x)=x^3\left(\frac{x}{2n+1}\right)^{x-2}e^{1-x}, \qquad 4 \leq x \leq 2n,$$
where $n \geq 3$ is fixed.  Considering $\ln(j(x))$ and its first two derivatives, it is seen that $j(x)$ is decreasing and then increasing on the interval $(4,2n)$.  Therefore, in order to maximize $j(x)$ on $[4,2n]$, we need only consider the values at the endpoints $x=4$ and $x=2n$.  Thus, for \eqref{leme2}, it suffices to show $\max\{h(2),h(n)\}<2$.
One may verify $$h(2)=\frac{64}{e^3}\left(\frac{4}{2n+1}\right)^2<2 \text{ and } h(n)=(2n)^3\left(\frac{2n}{2n+1}\right)^{2n-2}e^{1-2n}<2$$ for all $n \geq 3$, which implies \eqref{leme2} and completes the proof.
\end{proof}

The method used to prove the last theorem is potentially applicable to other sequences of positive real numbers, especially hyper-exponential sequences.  To show that the polynomial $\sum_{i = 0}^ma_ix^i$ where $m \geq 0$ and $a_i>0$ for all $i$ has the minimal number of real zeros for all $m$, it is enough to demonstrate the following:
\begin{align*}
(\text{i})&~\text{The sequences } u_n=\frac{a_{2n-1}}{a_{2n}} \text{ and } v_n=\frac{2na_{2n}}{
(2n+1)a_{2n+1}} \text{ are decreasing in }n,\\
(\text{ii})&~\sum_{i=1}^{n}a_{2i-1}u_n^{2i-1}<a_0, \quad n \geq 1,\\
(\text{iii})&~\sum_{i=1}^n2ia_{2i}v_n^{2i-1}<a_1, \quad n \geq 1.
\end{align*}
Once (i)--(iii) have been shown, a proof similar to that of Theorem \ref{n^nthm} above can be given for the sequence $a_n$.  For example, it is possible to show (we omit the details) steps (i)--(iii) in the case when $a_n=2^{n^k}$ for $n \geq 0$, where $k \geq 2$ is fixed.  The conditions (i)--(iii) above are by no means necessary; for example, $a_n=2^n$ has minimal $b_n$ sequence (see 5.3 above) despite it failing to satisfy any of (i)--(iii).

\section{Miscellaneous Examples}
Here, we exhibit the $b$ sequences corresponding  to some classical  number theoretic functions $a$.

\subsection{$a_n=\phi(n)$ (Euler totient function)}
Computations suggest that  $b_n = n \bmod 2$ and we seek a proof or counterexample.

\subsection{$a_n=\pi(n +2)$, the number of primes $\leq n+2$}
In this case $b$ begins:
$$
\begin{array}{cccccccccccccccccccc}
0, & 1, & 0, & 1, & 0, & 1, & 0, & 1, & 2, & 1, & 2, & 1, & 2, & 1, & 2,\\
1, & 2, & 1, & 2, & 1, & 2, & 1, & 2, & 1, & 2, & 1, & 2, & 1, & 2, & 1,\\
2, & 1, & 2, & 1, & 2, & 1, & 2, & 1, & 2, & 1, & 2, & 1, & 2, & 1, & 2,\\
1, & 2, & 1, & 2, & 1, & 2, & 1, & 2, & 1, & 2, & 1, & 2, & 1, & 2, & 1,\\
2, & 1, & 2, & 1, & 2, & 1, & 2, & 1, & 2, & 1, & 2, & 1, & 2, & 1, & 2,\\
1, & 2, & 1, & 2, & 1, & 2, & 1, & 2, & 1, & 2, & 1, & 2, & 1, & 2, & 1,\\
2, & 1, & 2, & 1, & 2, & 1, & 2, & 1, & 2, & 1, & 2, & 1, & 2, & 1, & 2,\\
1, & 2, & 1, & 2, & 1, & 2, & 1, & 2, & 1, & 2, & 1, & 2, & 1, & 2, & 1,\\
2, & 1, & 2, & 1, & 2, & 1, & 2, & 1, & 2, & 1, & 2, & 1, & 2, & 1, & 2,\\
1, & 2, & 1, & 2, & 1, & 2, & 1, & 2, & 1, & 2, & 1, & 2, & 1, & 2, & \dots

\end{array}
$$
\subsection{$a_n = \mu(n+1)$, Moebius Function}
In this case $b$ begins:
$$
\begin{array}{cccccccccccccccccccc}
0, & 1, & 2, & 2, & 2, & 3, & 2, & 2, & 2, & 3, & 2, & 2, & 2, & 3, & 4,\\
4, & 2, & 2, & 2, & 2, & 4, & 3, & 2, & 2, & 2, & 3, & 3, & 3, & 2, & 3,\\
2, & 2, & 4, & 3, & 4, & 4, & 4, & 3, & 4, & 4, & 4, & 3, & 2, & 2, & 2,\\
3, & 2, & 2, & 2, & 2, & 4, & 4, & 2, & 2, & 4, & 4, & 4, & 5, & 4, & 4,\\
2, & 3, & 3, & 3, & 4, & 5, & 2, & 2, & 4, & 3, & 4, & 4, & 2, & 3, & 3,\\
3, & 4, & 3, & 2, & 2, & 2, & 3, & 2, & 2, & 4, & 3, & 4, & 4, & 4, & 4,\\
4, & 4, & 4, & 5, & 4, & 4, & 4, & 4, & 4, & 4, & 4, & 5, & 2, & 2, & 2,\\
3, & 2, & 2, & 2, & 3, & 4, & 4, & 2, & 3, & 4, & 4, & 4, & 3, & 4, & 4,\\
4, & 3, & 4, & 4, & 4, & 4, & 4, & 4, & 4, & 5, & 4, & 4, & 4, & 5, & 5,\\
5, & 4, & 3, & 2, & 2, & 4, & 3, & 4, & 4, & 4, & 5, & 5, & 5, & 4, & \dots
\end{array}
$$

\subsection{$a_n = \sigma(n+1)$, Sum of  Divisors of $n+1$}
In this case $b$ begins:
$$
\begin{array}{cccccccccccccccccccc}
0, & 1, & 0, & 1, & 2, & 1, & 2, & 1, & 2, & 1, & 2, & 1, & 2, & 1, & 2,\\
1, & 2, & 1, & 2, & 1, & 2, & 1, & 2, & 1, & 2, & 1, & 2, & 1, & 2, & 1,\\
2, & 1, & 2, & 1, & 2, & 1, & 2, & 1, & 2, & 1, & 2, & 1, & 2, & 1, & 2,\\
1, & 2, & 1, & 2, & 1, & 2, & 1, & 2, & 1, & 2, & 1, & 2, & 1, & 2, & 1,\\
2, & 1, & 2, & 1, & 2, & 1, & 2, & 1, & 2, & 1, & 2, & 1, & 2, & 1, & 2,\\
1, & 2, & 1, & 2, & 1, & 2, & 1, & 2, & 1, & 2, & 1, & 2, & 1, & 2, & 1,\\
2, & 1, & 2, & 1, & 2, & 1, & 2, & 1, & 2, & 1, & 2, & 1, & 2, & 1, & 2,\\
1, & 2, & 1, & 2, & 1, & 2, & 1, & 2, & 1, & 2, & 1, & 2, & 1, & 2, & 1,\\
2, & 1, & 2, & 1, & 2, & 1, & 2, & 1, & 2, & 1, & 2, & 1, & 2, & 1, & 2,\\
1, & 2, & 1, & 2, & 1, & 2, & 1, & 2, & 1, & 2, & 1, & 2, & 1, & 2, & \dots
\end{array}
$$

\subsection{$a_n = \sigma_7(n+1)$, Sum of  the  $7$-th Powers of the  Divisors of $n+1$}
In this case $b$ begins:
$$
\begin{array}{cccccccccccccccccccc}
0, & 1, & 2, & 1, & 2, & 1, & 2, & 1, & 2, & 1, & 2, & 1, & 4, & 3, & 4,\\
3, & 4, & 3, & 4, & 3, & 4, & 3, & 4, & 3, & 4, & 3, & 4, & 3, & 4, & 3,\\
4, & 3, & 4, & 3, & 4, & 3, & 4, & 3, & 4, & 3, & 4, & 3, & 4, & 3, & 4,\\
3, & 4, & 3, & 4, & 3, & 4, & 3, & 4, & 3, & 4, & 3, & 4, & 3, & 4, & 3,\\
4, & 3, & 4, & 3, & 4, & 3, & 4, & 3, & 4, & 3, & 4, & 3, & 4, & 3, & 4,\\
3, & 4, & 3, & 4, & 3, & 4, & 3, & 4, & 3, & 4, & 3, & 4, & 3, & 4, & 3,\\
4, & 3, & 4, & 3, & 4, & 3, & 4, & 3, & 4, & 3, & 4, & 3, & 4, & 3, & 4,\\
3, & 4, & 3, & 4, & 3, & 4, & 3, & 4, & 3, & 4, & 3, & 4, & 3, & 4, & 3,\\
4, & 3, & 4, & 3, & 4, & 3, & 4, & 3, & 4, & 3, & 4, & 3, & 4, & 3, & 4,\\
3, & 4, & 3, & 4, & 3, & 4, & 3, & 4, & 3, & 4, & 3, & 4, & 3, & 4, & \dots
\end{array}
$$

\section{Acknowledgment} We wish to thank  Mourad Ismail for steering us to relevant papers in analysis and for  obtaining a copy of  the difficult to locate  paper by Ostrovskii \cite{O}. We also wish to thank Larry Dunning for providing us with a copy of the  paper by Kobel, Lovava and Sagraloff \cite{KRS}.


\begin{thebibliography}{10}\footnotesize

\bibitem{Ganelius} T. Ganelius, The zeros of partial sums of power series, {\it Duke Math. J.} {\bf 30} (1963), 533-540.

\bibitem{GMM}  D. Garth, D. Mills and P. Mitchell,
Polynomials generated by the Fibonacci sequence,
{\it J. Integer Seq.} {\bf 10} (2007), Art. 07.6.8.


\bibitem{KLV} O. Katkove, T. Lovava and A Vishnyakova, On power series having sections with only real zeros, {\it Comput. Methods Funct. Theory} {\bf 3(2)} (2003), 425--441.

\bibitem{KRS}  A. Kobel, F. Rouillier and M. Sagraloff, Computing real roots of real polynomials ... and now for real!, {\it ISSAC '16: Proceedings of the ACM on International Symposium on Symbolic and Algebraic Computation}, July 2016, 303--310.

\bibitem{Korevaar} J. Korevaar, The zeros of approximating polynomials and the canonical representation of an entire function, {\it Duke Math. J.} {\bf 18} (1951), 573--592.

\bibitem{Wang} L. Liu and Y. Wang, A unified approach to polynomial sequences with only real zeros,
{\it Adv. in Appl. Math.} {\bf 38(4)}  (2007), 542--560.

\bibitem{MS} T. Mansour and M. Shattuck, Polynomials whose coefficients are $k$-Fibonacci numbers, {\it Ann. Math. Inform.} {\bf 40} (2012), 57--76.

\bibitem{O} I. V. Ostrovskii, On zero distribution of sections and tails of power series, {\it Israel Mathematical Conference Proceedings}, {\bf Vol. 15}, 2001.

\bibitem{MSE} Russian Bot 2.0,  An interesting observation on the roots of certain polynomials,  \newline
 \url{https://math.stackexchange.com/questions/4057417}, 2021.

 \bibitem{Sh} M. Shattuck, On the zeros of some polynomials with combinatorial coefficients, {\it Ann. Math. Inform.} {\bf 42} (2013), 93--101.

 \bibitem{OEIS}  N. J. A. Sloane et al., The On-Line Encyclopedia of Integer Sequences, 2019. Available at \url{http://oeis.org}.

\bibitem{RPS} R.~P. Stanley, Log-concave and unimodal sequences in algebra, combinatorics,
              and geometry, {\it Graph Theory and its Applications: East and West Jinan}, (1986), 500--535.

\bibitem{Eulerian} Wikipedia, Eulerian number, \url{https://en.wikipedia.org/wiki/Eulerian_number}, 2021.

\bibitem{G} Wikipedia,  Generating function, \url{https://en.wikipedia.org/wiki/Generating_function}, 2021.

\bibitem{RRI} Wikipedia, Real-root isolation,  \url{https://en.wikipedia.org/w/index.php?title=Real-root_isolation&oldid=1024403573}, 2021.


\end{thebibliography}
\end{document}